\newtheorem{theorem}{Theorem}[section]
\newtheorem{proposition}{Proposition}[section]
\newtheorem{remark}{Remark}[section]
\newtheorem{definition}{Definition}[section]
\newtheorem{corollary}{Corollary}[section]
\newtheorem{lemma}{Lemma}[section]
\def\e{\mathbb{E}}
\def\p{\mathbb{P}}
\newcommand{\ind}{\mbox{\rm 1\hspace{-0.04in}I}}
\title[On distributions determined by their upward Wiener-Hopf factor]
{On distributions determined by their upward, space-time Wiener-Hopf factor}
\author{Lo\"ic Chaumont}
\address{Lo\"ic Chaumont -- LAREMA -- UMR CNRS 6093, Universit\'e d'Angers, 2 bd Lavoisier, 49045 Angers cedex~01}
\email{loic.chaumont@univ-angers.fr}
\author{Ron Doney}
\address{R.A. Doney Department of Mathematics, University of Manchester, Manchester,
M 13 9 PL.}
\email{ron.doney@manchester.ac.uk}
\keywords{Wiener-Hopf factors, convolution powers, exponential moments, completely monotone function}
\subjclass[2010]{60A10, 60E05}
\thanks{}
\date{\today}
\begin{document}

\begin{abstract} According to the Wiener-Hopf factorization, the characteristic function $\varphi$ of any probability distribution 
$\mu$ on $\mathbb{R}$ can be decomposed in a unique way as 
\[1-s\varphi(t)=[1-\chi_-(s,it)][1-\chi_+(s,it)]\,,\;\;\;|s|\le1,\,t\in\mathbb{R}\,,\]
where  $\chi_-(e^{iu},it)$ and $\chi_+(e^{iu},it)$ are the characteristic functions of possibly defective distributions 
in $\mathbb{Z}_+\times(-\infty,0)$ and $\mathbb{Z}_+\times[0,\infty)$, respectively. 

We prove that $\mu$ can be characterized by the sole data of the upward factor 
$\chi_+(s,it)$, $s\in[0,1)$, $t\in\mathbb{R}$ in many cases including the cases where: 

1) $\mu$ has some exponential moments; 

2) the function $t\mapsto\mu(t,\infty)$ is completely monotone on $(0,\infty)$; 

3) the density of $\mu$ on $[0,\infty)$ admits an analytic continuation on $\mathbb{R}$. 

We conjecture that any probability distribution is actually characterized by its upward factor. 
This conjecture is equivalent to the following: {\it Any probability measure $\mu$ on $\mathbb{R}$ whose 
support is not included in $(-\infty,0)$ is determined by its convolution powers $\mu^{*n}$, 
$n\ge1$ restricted to $[0,\infty)$}. We show that in many instances, the sole knowledge of $\mu$ and $\mu^{*2}$ 
restricted to $[0,\infty)$ is actually sufficient to determine $\mu$. Then we investigate the analogous problem in
the framework of infinitely divisible distributions. 
\end{abstract}

\maketitle

\section{Introduction}\label{int}

Let $\mu$ be any probability measure on $\mathbb{R}$. Denote by $(S_n)$ a random walk with step distribution 
$\mu$, such that $S_0=0$, a.s. Define the first ladder times associated to $(S_n)$ by  
\[\tau_-=\inf\{n\ge1:S_n<0\}\,,\;\;\;\tau_+=\inf\{n\ge1:S_n\ge0\}\,.\]
Then the Wiener-Hopf factorization of the characteristic function $\varphi(t)=\int e^{itx}\mu(dx)$ of $\mu$ can be written
as,
\begin{equation}\label{6283}
1-s\varphi(t)=[1-\chi_-(s,it)][1-\chi_+(s,it)]\,,\;\;\;|s|\le1,\,t\in\mathbb{R}\,,
\end{equation}
where $\chi_-$ and $\chi_+$ are the downward and upward space-time Wiener-Hopf factors,
\[\chi_-(s,it)=\e(s^{\tau_-}e^{itS_{\tau_-}}1_{\{\tau_-<\infty\}})\;\;\;\mbox{and}\;\;\;\chi_+(s,it)=
\e(s^{\tau_+}e^{itS_{\tau_+}}1_{\{\tau_+<\infty\}})\,.\]
To paraphrase W. Feller \cite{fe}, XVIII.3, {\it the remarkable feature of the factorization $(\ref{6283})$ is that it represents 
an arbitrary characteristic function $\varphi$ in terms of two (possibly defective) distributions, one being concentrated on the 
half line $(-\infty,0)$ and the other one on the half line  $[0,\infty)$}. However, this feature only exploits identity (\ref{6283}) for 
fixed $s\neq0$ and reflects the fact that $\mu$ is determined by the knowledge of the distributions of both $S_{\tau_-}$ and 
$S_{\tau_+}$. But one may wonder about the extra information brought by the joint distributions $(\tau_-,S_{\tau_-})$ and 
$(\tau_+,S_{\tau_+})$. In particular, is it true in general that $\mu$ is determined by only one of these joint distributions? 
or equivalently, is it true that $\varphi$ is determined by only one of its space-time Wiener-Hopf factors?\\

The aim of this paper is an attempt to answer the latter question. We will actually show that $\mu$ is determined by $\chi_+(s,it)$ 
in some quite large classes of distributions including the case where $\mu$ has some positive exponential moments, or when 
$t\mapsto\mu(t,\infty)$ is completely monotone on $(a,\infty)$, for some $a\ge0$, or satisfies some property which is slightly stronger 
than analyticity. Obviously all these assumptions can be verified from the sole data of $\chi_+(s,it)$. These different cases cover a 
sufficiently large range of distributions for us to allow ourselves to raise the following conjecture. Let $\mathcal{M}_1$ be 
the set of probability measures on $\mathbb{R}$.\\

\noindent {\bf Conjecture C.} {\it  Any distribution $\mu\in\mathcal{M}_1$ whose support is not included in $(-\infty,0)$ is 
determined by its upward space-time Wiener-Hopf factor $\chi_+(s,it)$, $|s|<1$, $t\in\mathbb{R}$.}\\

\noindent A crucial step in the proof of (\ref{6283}) is the following development of the factor $\chi_+(s,t)$, for $|s|<1$ and
$t\in\mathbb{R}$,
\begin{equation}\label{1476}
\log\frac1{1-\chi_+(s,it)}=\sum_{n=1}^\infty\frac{s^n}n\int_{[0,\infty)}e^{itx}\mu^{*n}(dx)\,,
\end{equation}
see \cite{fe}, XVIII.3, where $\mu^{*n}$ is the nth fold convolution product of $\mu$ by itself. We will actually refer to 
$\mu^{*n}$, $n\ge0$ as the convolution powers of $\mu$. This identity shows that the 
data of $\chi_+$ is equivalent to the knowledge of the measures $\mu^{*n}$, $n\ge1$ on $[0,\infty)$ and leads to the 
following equivalent conjecture.\\

\noindent {\bf Conjecture C'.} {\it Any distribution  $\mu\in\mathcal{M}_1$  whose support is not included in $(-\infty,0)$ is 
determined by its convolution powers $\mu^{*n}$, $n\ge1$ restricted to $[0,\infty)$.}\\

Each of the next sections corresponds to a class of probability distributions for which Conjecture C holds. For the first one in 
Section \ref{FourierLaplace}, we prove that distributions having some particular exponential moments 
satisfy conjecture C. Then in Section \ref{5673} we consider three other classes for which a much 
stronger result than Conjectures C and C' is true. We will see that there are actually many distributions which are determined by 
the sole data of $\mu$ and $\mu^{*2}$ on $[0,\infty)$. This is the case when the function $t\mapsto\mu(t,\infty)$ is smooth 
enough. In Subsection \ref{monotone} we will consider the case where the function $t\mapsto\mu(t,\infty)$ is completely 
monotone on $(a,\infty)$, for some $a\ge0$ and in Subsection \ref{analytic} we will make a slightly stronger assumption than 
analyticity on this function. We will also present the discrete counterpart of the completely monotone case in Subsection \ref{6288}. 
Finally in Section \ref{2482} we will consider Conjecture C in the restricted class of infinitely divisible distributions and show that if 
the upper tail of the L\'evy measure is completely monotone, then $\mu$ is determined by its upper Wiener-Hopf factor. Then we 
end this paper in Section \ref{2450} with some important remarks on the possibility of extending the classes of distributions 
studied. 

Throughout this paper, we will denote by  $\mathscr{C}$ the set of distributions satisfying Conjecture C. Let us give a proper 
definition of this set.

\begin{definition}
Let $\mathscr{C}$ be the set of distributions $\mu\in\mathcal{M}_1$ which are
determined by their upward Wiener-Hopf factor $\chi_+(s,it)$, for $|s|\le1$ and $t\in\mathbb{R}$ or equivalently by the data of 
their convolution powers $\mu^{*n}$, $n\ge1$ restricted to $[0,\infty)$. More formally,
\[\mathscr{C}=\{\mu\in\mathcal{M}_1:\mbox{if $\mu_1\in\mathcal{M}_1$
satifies $\mu^{*n}=\mu_1^{*n}$, on $[0,\infty)$, for all $n\ge0$, then $\mu=\mu_1$}\}\,.\]
\end{definition}

The problem we investigate here originates from a result in V.~Vigon's PhD thesis \cite{vi}, see Section 4.5 therein, where a question 
equivalent to Conjecture C is raised in the setting of  L\'evy processes. Our question is actually more general since it concerns distributions 
which are not necessarily infinitely divisible. In particular, a positive answer to Conjecture C would imply that the law of any L\'evy process 
$(X_t,t\ge0)$  is determined by one of its space-time Wiener-Hopf factors or equivalently by the marginals of the process $(X_t^+,t\ge0)$, 
see Section \ref{2482}.

\section{When $\mu$ admits exponential moments.}\label{FourierLaplace} 

\subsection{Recovering the characteristic function and the moment generating function.}
In this paper, we will always assume that the support of the measure $\mu$ is not included in $(-\infty,0)$. 
Let us observe that from the data of the upward Wiener-Hopf factor $\chi_+(s,it)$, for $|s|\le1$ and $t\in\mathbb{R}$ or 
equivalently from the data of the measures $\mu^{*n}$, $n\ge1$ restricted to $[0,\infty)$, we know the sequences, $\p(S_n<0)$ 
and $\p(S_n\ge0)$, $n\ge0$, as well as the distributions of both $\tau_-$ and $\tau_+$. In particular we know whether $(S_n)$ 
oscillates, drifts to $-\infty$, or drifts to $\infty$.  The next result shows that provided $n\mapsto\p(S_n<0)$ tends to 0 sufficiently 
fast along some subsequence, it is possible to recover the characteristic function $\varphi$ of $\mu$ on some interval containing 
0, from the measures $\mu^{*n}$ restricted to $[0,\infty)$.

\begin{lemma}\label{2361} 
Assume that there is $\alpha>0$ such that, at least for a subsequence, 
\begin{equation}\label{0236}
\p(S_{n}<0)\le e^{-\alpha n}\,.
\end{equation}
Then for all $t$ such that $|\varphi(t)|>e^{-\alpha}$, along this subsequence,
\[\lim_{n\rightarrow+\infty}\e(e^{itS_{n}}\ind_{\{S_n\ge0\}})^{1/{n}}=\varphi(t)\,.\]
In particular, if $(\ref{0236})$ holds then $\varphi$ can be determined on some neighborhood of $0$. 
\end{lemma}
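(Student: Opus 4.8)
The plan is to use the exact identity $\varphi(t)^n=\e(e^{itS_n})=A_n(t)+B_n(t)$, where I set
\[A_n(t)=\e(e^{itS_n}\ind_{\{S_n\ge0\}})=\int_{[0,\infty)}e^{itx}\,\mu^{*n}(dx)\quad\text{and}\quad B_n(t)=\e(e^{itS_n}\ind_{\{S_n<0\}})\,.\]
The term $A_n(t)$ is exactly what the data provide, and the hypothesis (\ref{0236}) is itself read off from the data since $\p(S_n<0)=1-\mu^{*n}([0,\infty))$. So the whole matter reduces to showing that the unknown truncation $B_n(t)$ is asymptotically negligible against $\varphi(t)^n$, so that $A_n(t)$ carries the full exponential growth and phase of $\varphi(t)^n$.

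First I would bound $B_n(t)$ crudely: since $|e^{itx}|=1$,
\[|B_n(t)|\le\int_{(-\infty,0)}\mu^{*n}(dx)=\p(S_n<0)\le e^{-\alpha n}\]
along the given subsequence. Writing $A_n(t)=\varphi(t)^n\bigl(1-\delta_n(t)\bigr)$ with $\delta_n(t)=B_n(t)\varphi(t)^{-n}$, the standing assumption $|\varphi(t)|>e^{-\alpha}$ gives
\[|\delta_n(t)|\le\bigl(e^{-\alpha}/|\varphi(t)|\bigr)^n\longrightarrow0\]
geometrically along the subsequence. Factoring $\varphi(t)$ out of the root, i.e. writing $A_n(t)^{1/n}=\varphi(t)\,(1-\delta_n(t))^{1/n}$ and taking the principal branch of $(1-\delta_n(t))^{1/n}$ (legitimate once $1-\delta_n(t)$ lies near $1$), one has $(1-\delta_n(t))^{1/n}=\exp\bigl(\tfrac1n\,\mathrm{Log}(1-\delta_n(t))\bigr)\to1$, whence $A_n(t)^{1/n}\to\varphi(t)$. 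The final assertion then follows at once, since $|\varphi(0)|=1>e^{-\alpha}$ and $\varphi$ is continuous, so that $\{t:|\varphi(t)|>e^{-\alpha}\}$ is an open neighbourhood of $0$ on which the limit holds.

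The estimate on $B_n(t)$ being entirely elementary, the one genuinely delicate point — and the part I would formulate most carefully — is the multivaluedness of the complex $n$-th root: a priori the data determine an $n$-th root of $A_n(t)$ only up to a root of unity, and as $n\to\infty$ these roots become dense on the circle of radius $|\varphi(t)|$. The computation above identifies the relevant branch as the one obtained by factoring $\varphi(t)^n$ out of $A_n(t)$, and shows in particular that the modulus is recovered with no ambiguity, $|\varphi(t)|=\lim_n|A_n(t)|^{1/n}$. This modular ambiguity, rather than the decay of the tail piece, is what the equality ``$=\varphi(t)$'' encodes and what must be kept in mind when the lemma is used; the cleanest reading of the concluding sentence is that $\varphi$ is pinned down on a neighbourhood of $0$ where, by continuity, $|\varphi|>e^{-\alpha}$ and the stated limit is in force.
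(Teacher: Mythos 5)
Your proof is correct and follows essentially the same route as the paper's: the identical decomposition $\e(e^{itS_n}\ind_{\{S_n\ge0\}})=\varphi(t)^n-\e(e^{itS_n}\ind_{\{S_n<0\}})$, the identical bound $\bigl|\varphi(t)^{-n}\e(e^{itS_n}\ind_{\{S_n<0\}})\bigr|\le\bigl(|\varphi(t)|e^{\alpha}\bigr)^{-n}\to0$ along the subsequence, and extraction of $\varphi(t)$ from the $n$-th root. Your closing remark on the multivaluedness of the complex $n$-th root flags a genuine subtlety that the paper's proof passes over in silence (it implicitly takes the branch obtained by factoring out $\varphi(t)^n$, which can be pinned down from the data by continuity in $t$ from the positive real root at $t=0$), so your version is, if anything, slightly more careful.
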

\begin{proof} Recall that $\varphi(t)$ tends to 1 as $t$ tends to 0. Then let $t$ be sufficiently small so that $\varphi(t)\neq0$ and 
let us write,
\begin{eqnarray}
\e(e^{it S_n}\ind_{\{S_n\ge0\}})^{1/n}&=&\left[\varphi(t)^n-\e(e^{it S_n}\ind_{\{S_n<0\}})\right]^{1/n}\nonumber\\
&=&\varphi(t)[1-\varphi(t)^{-n}\e(e^{it S_n}\ind_{\{S_n<0\}})]^{1/n}\,.\label{5454}
\end{eqnarray}
From the assumption, whenever is $t$ such that $|\varphi(t)|>e^{-\alpha}$ and for all $n$ such that $\p(S_n<0)<e^{-\alpha n}$,
\begin{eqnarray*}
 \left|\varphi(t)^{-n}\e(e^{it S_n}\ind_{\{S_n<0\}})\right|&\le&| \varphi(t)^{-n}\p(S_n<0)|\,,\\
 &\le&\left(|\varphi(t)|e^{\alpha}\right)^{-n}\,.
 \end{eqnarray*}
Therefore, the left hand side of the above inequality tends to 0 along a subsequence and this yields the result thanks to 
equation (\ref{5454}). 
\end{proof}
\noindent Since, for all $n\ge0$,
\[\p(S_1<0,S_2-S_1<0,\dots,S_n-S_{n-1}<0)=\p(S_1<0)^n\le\p(S_n<0)\,,\] 
$(\ref{0236})$ cannot hold for all $\alpha>0$, unless $\p(S_1\ge0)=1$. Note also that if $(\ref{0236})$ holds then the random 
walk $(S_n)$ cannot drift to $-\infty$. Moreover, if it holds for all suffciently large $n$, then $(S_n)$ necessarily drifts to $\infty$
thanks to Spitzer's criterion which asserts that this happens if and only if $\sum n^{-1}\p(S_n<0)<\infty$.\\

\begin{lemma}\label{3478} 
Let $\mu_1,\mu_2\in\mathcal{M}_1$. Denote by $\varphi_1$ and $\varphi_2$ their characteristic functions
and by $\phi_1$ and $\phi_2$ their moment generating functions, that is $\varphi_j(u):=\int_{\mathbb{R}}e^{iu x}\,\mu_j(dx)$
and $\phi_j(v):=\int_{\mathbb{R}}e^{v x}\,\mu_j(dx)$, $u,v\in\mathbb{R}$, $j=1,2$. Assume that there 
exists $\lambda>0$ such that $\phi_1(\lambda)<\infty$ and $\phi_2(\lambda)<\infty$ and that there is an open interval $I$ 
such that $\varphi_1(u)=\varphi_2(u)$, for all $u\in I$. Then $\mu_1=\mu_2$. 
\end{lemma}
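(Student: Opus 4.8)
\emph{Proof proposal.} The plan is to promote the two characteristic functions to genuine holomorphic functions on a horizontal strip, and then to exploit their coincidence on $I$ by a boundary uniqueness argument. For $j=1,2$ set $\psi_j(z)=\int_{\mathbb{R}}e^{izx}\,\mu_j(dx)$ and consider the closed strip $\overline{S}=\{z\in\mathbb{C}:-\lambda\le\operatorname{Im}z\le0\}$. Splitting the integral at the origin, for $z\in\overline{S}$ one has $|e^{izx}|=e^{-(\operatorname{Im}z)x}\le1$ when $x\le0$ and $|e^{izx}|\le e^{\lambda x}$ when $x\ge0$, so that $|\psi_j(z)|\le1+\phi_j(\lambda)<\infty$. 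Thus $\psi_j$ is bounded on $\overline{S}$; moreover $\psi_j(t)=\varphi_j(t)$ for $t\in\mathbb{R}$ and $\psi_j(-i\lambda)=\phi_j(\lambda)$, so the hypothesis indeed places us inside the domain of convergence.

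First I would check that $\psi_j$ is holomorphic on the open strip $S=\{-\lambda<\operatorname{Im}z<0\}$ and continuous on $\overline{S}$. Continuity follows from dominated convergence with the fixed $\mu_j$-integrable dominating function $e^{\lambda x}\ind_{\{x\ge0\}}+\ind_{\{x<0\}}$. Holomorphy is most cleanly obtained through Morera's theorem: for any triangle $T\subset S$ the integrand is uniformly bounded by the same integrable function (here the strict inequality $\operatorname{Im}z<0$ even forces exponential decay of $e^{izx}$ on the negative axis), so Fubini's theorem together with Cauchy's theorem for the entire map $z\mapsto e^{izx}$ gives $\oint_{T}\psi_j(z)\,dz=0$.

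The heart of the argument is then to pass from coincidence on the boundary interval $I$ to global coincidence. Set $G=\psi_1-\psi_2$, a bounded function holomorphic on $S$ and continuous on $\overline{S}$, with $G(t)=\varphi_1(t)-\varphi_2(t)=0$ for every $t\in I$. Since $G$ is continuous up to the real boundary and vanishes, hence in particular is real, on the open segment $I$, the Schwarz reflection principle applies: $G$ extends to a function $\widetilde{G}$ holomorphic on the connected open set $S\cup I\cup S^{*}$, where $S^{*}=\{0<\operatorname{Im}z<\lambda\}$ is the reflection of $S$ across $\mathbb{R}$, the extension being $\widetilde{G}(\overline{z})=\overline{G(z)}$. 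On $I$ we still have $\widetilde{G}=0$, and the points of $I$ are accumulation points of $I$ lying in the interior of this domain, so the identity theorem yields $\widetilde{G}\equiv0$. In particular $G\equiv0$ on $S$, and by continuity $\varphi_1=\varphi_2$ on all of $\mathbb{R}$; the uniqueness theorem for characteristic functions then gives $\mu_1=\mu_2$.

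I expect the reflection step to be the delicate point, precisely because the hypothesis supplies exponential moments only on the positive side: this yields holomorphy merely on a one-sided strip having $\mathbb{R}$ on its boundary, so the identity theorem cannot be invoked directly along $\mathbb{R}$. The reflection principle is exactly what bridges this gap, and it works here only because the two characteristic functions agree on $I$ (so that $G$ is real there). Should one prefer to avoid reflection, the same conclusion follows from a boundary uniqueness theorem for bounded holomorphic functions: after conformally mapping $S$ onto the unit disc, $G$ becomes an $H^{\infty}$ function whose non-tangential boundary values vanish on an arc of positive measure, which forces $G\equiv0$ since otherwise $\log|G^{*}|$ would be integrable on the circle.
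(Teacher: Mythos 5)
Your proposal is correct and follows essentially the same route as the paper's proof: analytic continuation of $\varphi_1-\varphi_2$ to the lower strip $\{-\lambda<\operatorname{Im}z<0\}$ (justified here in more detail via Morera and dominated convergence), Schwarz reflection across $I$ using that the difference vanishes (hence is real) there, the identity theorem, continuity up to $\mathbb{R}$, and injectivity of the Fourier transform. The only differences are cosmetic — you reflect the whole strip where the paper reflects rectangles $O_\pm$ above and below $I$, and your closing $H^\infty$ boundary-uniqueness remark is a valid alternative the paper does not use.
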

\begin{proof} 
Let $D=\{z=u+iv\in\mathbb{C}:u\in\mathbb{R},\,-\lambda<v<0\}$. From the assumptions, the function $f:=\varphi_1-\varphi_2$ admits 
an analytic continuation in the open domain $D$. Then let $O_+=\{z=u+iv\in\mathbb{C}:u\in I,\,0<v<\lambda\}$ and 
$O_-=\{z=u+iv\in\mathbb{C}:u\in I,\,-\lambda<v<0\}$. From Schwarz reflection principle, $f$ admits an analytic continuation in the open 
domain $O_+\cup I \cup O_-$. From the principle of isolated zeroes, $f$ vanishes in $O_+\cup I \cup O_-$ and from the same principle, 
it vanishes in $D$. By continuity, $f(u)=0$ for all $u\in\mathbb{R}$ and the result follows from injectivity of the Fourier 
transform.\\
\end{proof}

\noindent  We will say that a distribution $\mu\in\mathcal{M}_1$ admits an exponential moment if there is 
$\lambda\in\mathbb{R}\setminus\{0\}$ such that $\phi(\lambda):=\int_{\mathbb{R}}e^{\lambda x}\,\mu(dx)<\infty$.\\

\begin{lemma}\label{3024} 
The characteristic function of a distribution having an exponential moment cannot vanish identically in an interval. 
\end{lemma}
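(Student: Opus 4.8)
The plan is to exploit analyticity of the moment generating function. If $\mu$ admits an exponential moment, say $\phi(\lambda)<\infty$ for some $\lambda\neq0$, then the characteristic function $\varphi$ extends analytically into a horizontal strip of the complex plane. First I would treat the case $\lambda>0$: the integral $\int_{\mathbb{R}}e^{zx}\,\mu(dx)$ converges and defines an analytic function of $z=u+iv$ on the strip $\{0<v<\lambda\}$ (or on $\{-\lambda<v<0\}$ when $\lambda<0$), by the standard criterion that dominated convergence permits differentiation under the integral sign whenever $\mathrm{Re}\,z$ stays in the open interval where the real exponential moment is finite. The boundary values of this analytic function along the real axis $v=0$ are precisely $\varphi(u)$, $u\in\mathbb{R}$.

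Next I would argue by contradiction. Suppose $\varphi$ vanishes identically on some nondegenerate interval $I\subset\mathbb{R}$. Regard $\varphi$ as the restriction to the real line of the function $F(z)=\int_{\mathbb{R}}e^{izx}\,\mu(dx)$, which is analytic on an open strip containing a one-sided neighborhood of the real axis (the side determined by the sign of $\lambda$), and continuous up to the real boundary. The hard part is the following technical point: the principle of isolated zeros applies directly only in an open connected domain, but here $I$ lies on the boundary of the strip, not in its interior. I would resolve this exactly as in the proof of Lemma \ref{3478}, by invoking the Schwarz reflection principle to produce an honest open two-sided neighborhood of $I$ on which $F$ is analytic, so that the zero set of $F$ on $I$ becomes a set with an accumulation point in the interior of the domain of analyticity.

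More concretely, assuming $\lambda>0$ so that $F$ is analytic on the strip $\{0<\mathrm{Im}\,z<\lambda\}$ and continuous on its closure at the real axis, the reflection principle extends $F$ analytically across the open segment $I$ to a neighborhood $O_+\cup I\cup O_-$. Since $F\equiv0$ on $I$ and $I$ has accumulation points, the principle of isolated zeros forces $F\equiv0$ on that whole neighborhood, hence on the full connected strip by analytic continuation, and therefore $\varphi\equiv0$ on all of $\mathbb{R}$ by continuity up to the boundary. This contradicts $\varphi(0)=\int_{\mathbb{R}}\mu(dx)=1$, proving that $\varphi$ cannot vanish identically on any interval.

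I expect the principal obstacle to be making the reflection argument rigorous, namely verifying that $F$ is genuinely continuous up to the real boundary of the strip and real-valued in the appropriate sense so that Schwarz reflection applies. A clean way to sidestep any subtlety is to note that if $\varphi$ vanishes on $I$, then by analyticity of $F$ and the identity theorem applied on the interior strip (approaching $I$ from inside), one already concludes $F\equiv0$ there; since $\varphi(0)=1$ cannot be a boundary limit of the identically zero function, the contradiction follows. Either route yields the claim, the essential ingredient in both being that an exponential moment upgrades $\varphi$ from a merely continuous function to the boundary trace of an analytic one.
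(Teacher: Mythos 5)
Your proof is correct and follows essentially the same route as the paper, which disposes of Lemma \ref{3024} precisely by repeating the argument of Lemma \ref{3478}: analytic continuation of $\varphi$ into a horizontal strip afforded by the exponential moment, Schwarz reflection across the interval $I$, and the principle of isolated zeros, giving $\varphi\equiv 0$ on $\mathbb{R}$ and contradicting $\varphi(0)=1$. One minor sign slip: for $\lambda>0$ the strip of analyticity of $F(z)=\int_{\mathbb{R}}e^{izx}\,\mu(dx)$ is $\{-\lambda<\mathrm{Im}\,z<0\}$ (since $|e^{izx}|=e^{-x\,\mathrm{Im}\,z}$), not $\{0<\mathrm{Im}\,z<\lambda\}$, but as the reflection argument is symmetric this does not affect the proof.
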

\begin{proof} 
Let $\varphi$ be the characteristic function of $\mu\in\mathcal{M}_1$. If $\varphi$ vanishes in an interval and if  there is 
$\lambda\in\mathbb{R}\setminus\{0\}$ such that $\phi(\lambda)<\infty$, then we conclude
from the same arguments as in the proof of Lemma \ref{3478} (replacing $f$ by $\varphi$) that $\varphi(u)=0$ for all 
$u\in\mathbb{R}$, which is absurd since $\varphi$ is a characteristic function. 
\end{proof}

\noindent Lemma \ref{3024} was noticed in \cite{sm} for nonnegative random variables.\\

\begin{theorem}\label{4572} 
Assume that we know the measures $\mu^{*n}$ restricted to $[0,\infty)$. Then we can determine if $(\ref{0236})$ holds. 
Moreover if $(\ref{0236})$ holds, then $\mu$ belongs to the class $\mathscr{C}$. 
\end{theorem}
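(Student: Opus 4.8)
The plan is to treat the two assertions separately, the first being essentially free and the second being the substance. For the first claim, note that knowing $\mu^{*n}$ on $[0,\infty)$ we know in particular the total masses $\mu^{*n}([0,\infty))=\p(S_n\ge0)$, hence the whole sequence $\p(S_n<0)=1-\mu^{*n}([0,\infty))$, $n\ge1$. Since $(\ref{0236})$ is a condition bearing only on this sequence, we can decide whether it holds. The work is entirely in the second claim.

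So assume $(\ref{0236})$ holds and let $\mu_1\in\mathcal{M}_1$ satisfy $\mu^{*n}=\mu_1^{*n}$ on $[0,\infty)$ for every $n\ge0$; write $(S_n)$ and $(S_n^{(1)})$ for the two random walks. The case $n=1$ already gives $\mu=\mu_1$ on $[0,\infty)$, so that $\nu:=\mu-\mu_1$ is a finite signed measure carried by $(-\infty,0)$; this one-sided support will be the key geometric input at the end. Taking total masses as above, $\p(S_n^{(1)}\ge0)=\mu_1^{*n}([0,\infty))=\mu^{*n}([0,\infty))=\p(S_n\ge0)$ for all $n$, so $\mu_1$ satisfies $(\ref{0236})$ with the very same $\alpha$ and subsequence. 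Finally, for every $t$,
\[\e(e^{itS_n}\ind_{\{S_n\ge0\}})=\int_{[0,\infty)}e^{itx}\mu^{*n}(dx)=\int_{[0,\infty)}e^{itx}\mu_1^{*n}(dx)=\e(e^{itS_n^{(1)}}\ind_{\{S_n^{(1)}\ge0\}}),\]
so $\mu$ and $\mu_1$ feed Lemma \ref{2361} with identical data.

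My plan is now to recover $\varphi$ near $0$ and then exploit the support of $\nu$. Applying Lemma \ref{2361} to both walks should yield $\varphi=\varphi_1$ on a neighbourhood of $0$. Concretely, for $t$ with $|\varphi(t)|>e^{-\alpha}$ and $|\varphi_1(t)|>e^{-\alpha}$ (a neighbourhood of $0$, as both tend to $1$ there), the displayed identity together with $(\ref{5454})$ gives $|\varphi(t)^n-\varphi_1(t)^n|\le 2e^{-\alpha n}$ along the subsequence $(n_k)$, whence $(\varphi(t)/\varphi_1(t))^{n_k}\to1$. This forces $|\varphi(t)|=|\varphi_1(t)|$; writing $\varphi(t)/\varphi_1(t)=e^{i\psi(t)}$ with $\psi$ continuous and $\psi(0)=0$, we get $e^{in_k\psi(t)}\to1$ for all $t$ in a small interval $I$ around $0$. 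Since $\psi$ is continuous with $\psi(0)=0$, its image $\psi(I)$ is an interval containing $0$; as $e^{in_ks}\to1$ for every $s\in\psi(I)$, this image cannot contain a nondegenerate interval $[a,b]$, for otherwise dominated convergence would give $\int_a^b e^{in_ks}\,ds\to b-a>0$, against the bound $\left|\int_a^b e^{in_ks}\,ds\right|\le 2/n_k$. Hence $\psi\equiv0$ and $\varphi=\varphi_1$ on $I$. I expect this passage from convergence of $n$th roots to an honest equality of characteristic functions to be the main obstacle, precisely because the branch of the root selected in Lemma \ref{2361} depends on the unknown $\varphi$; the ratio argument above is what removes this ambiguity.

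It then remains to upgrade $\varphi=\varphi_1$ near $0$ to $\mu=\mu_1$. Since $\nu$ is carried by $(-\infty,0)$, the function $\hat\nu(z)=\int_{(-\infty,0)}e^{izx}\nu(dx)$ is well defined, bounded and analytic on the lower half-plane $\{\mathrm{Im}\,z<0\}$ and continuous up to $\mathbb{R}$, because $|e^{izx}|\le1$ there for $x<0$; its restriction to $\mathbb{R}$ is $\varphi-\varphi_1$, which vanishes on a real interval around $0$. The Schwarz reflection principle and the principle of isolated zeroes—exactly as in the proof of Lemma \ref{3478}, the one-sided support of $\nu$ now furnishing a full half-plane instead of a strip, so that no exponential moment of $\mu$ is needed—then give $\hat\nu\equiv0$ on $\mathbb{R}$. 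By injectivity of the Fourier transform $\nu=0$, hence $\mu=\mu_1$, which shows $\mu\in\mathscr{C}$.
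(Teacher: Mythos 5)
Your proof is correct, and while it shares the paper's starting point it takes a genuinely different route to the conclusion. The paper's own proof invokes Lemma \ref{2361} to say that the data determine $\varphi$ on a neighbourhood of $0$, then turns to the Wiener--Hopf identity (\ref{6283}) at $s=1$: since $\chi_+(1,it)$ is part of the data, $\chi_-(1,it)$ is determined on that neighbourhood; as $S_{\tau_-}$ is nonpositive, $\e(e^{\lambda S_{\tau_-}}\,|\,\tau_-<\infty)\le 1$ for all $\lambda\ge0$, so Lemma \ref{3478} upgrades the local determination of $\chi_-(1,it)$ to all of $\mathbb{R}$, and (\ref{6283}) then recovers $\varphi$ everywhere. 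You bypass $\chi_-$ entirely: taking a competitor $\mu_1$ with identical data, you prove $\varphi=\varphi_1$ near $0$ via the ratio argument $(\varphi/\varphi_1)^{n_k}\to1$, the continuous lifting $\varphi/\varphi_1=e^{i\psi}$, and the estimate $\bigl|\int_a^b e^{in_ks}\,ds\bigr|\le 2/n_k$ against dominated convergence; you then apply the Schwarz-reflection/isolated-zeros mechanism (the engine inside the proof of Lemma \ref{3478}) directly to $\hat\nu$ with $\nu=\mu-\mu_1$, whose support in $(-\infty,0)$ makes $\hat\nu$ bounded and analytic on the whole lower half-plane with no moment hypothesis on $\mu$ at all. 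Each approach buys something. Yours is in one respect more careful than the paper's: Lemma \ref{2361} asserts convergence of $n$th roots, but $z\mapsto z^{1/n}$ is multivalued and the branch implicitly selected in (\ref{5454}) depends on the unknown $\varphi$, so the assertion that ``$\varphi$ can be determined'' tacitly requires exactly the uniqueness argument you supply; since membership in $\mathscr{C}$ is by definition a uniqueness statement (two laws with the same data coincide), your formulation is precisely what the theorem needs. It also sidesteps a small point the paper leaves implicit, namely that solving (\ref{6283}) for $\chi_-(1,it)$ requires $1-\chi_+(1,it)\neq0$, which fails at $t=0$ whenever $\p(\tau_+<\infty)=1$. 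Conversely, the paper's route is constructive --- it exhibits $\chi_-$, hence $\varphi$, as computable from the data rather than merely unique --- and it stays within the Wiener--Hopf formalism on which the rest of the paper is built. Both proofs rest on the same analytic core: recovery of $\varphi$ near $0$ from (\ref{0236}), followed by analytic continuation across a real interval furnished by one-sided support.
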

\begin{proof} 
The first assertion is trivial since $1-\mu^{*n}[0,\infty)=\p(S_{n}<0)$. Now if $(\ref{0236})$ holds, then from Lemma \ref {2361}, 
the characteristic function $\varphi$ of $\mu$ can be determined on some neighborhood of 0. Since the function 
$t\mapsto \chi_+(1,it)$ is known, from (\ref{6283}), it means that the function $t\mapsto \chi_-(1,it)$ can be determined on the 
same neighborhood. But $t\mapsto\chi_-(1,it)/\chi_-(1,0)$ is the characteristic function of the non positive random variable 
$S_{\tau_-}$ under $\p(\,\cdot\,|\,\tau_-<\infty)$. Since $\e(e^{\lambda S_{\tau_-}}\,|\,\tau_-<\infty)\le1$, for all $\lambda\ge0$, 
we derive from Lemma \ref{3478}  that $\chi_-(1,it)$ is determined for all $t\in\mathbb{R}$. Therefore, from (\ref{6283}), 
$\varphi(t)$ is determined for all $t\in\mathbb{R}$ and the result follows.\\ 
\end{proof}  
 
\begin{remark}\label{2699}
Distributions having negative exponential moments provide examples for which $(\ref{0236})$ is satisfied.
More specifically, assume that there is $\lambda<0$ such that $\phi(\lambda)<1$, then 
\[\p(S_n<0)\le \e(e^{\lambda S_n}\ind_{\{S_n<0\}})\le\phi(\lambda)^n\,,\]
which implies $(\ref{0236})$. The question of finding an example of a distribution with no exponential moment which satisfies 
$(\ref{0236})$ remains open.\\ 
\end{remark}

\noindent Recall that $\phi$ is a convex function on the interval $\{\alpha:\phi(\alpha)<\infty\}$. Moreover, since the support of $\mu$ is not included in $(-\infty,0)$,  $\phi(\alpha)$ is nondecreasing for $\alpha$ large enough. If
$\lambda\in\mathbb{R}$ is such that $\lambda=\inf\{\alpha:\phi(\alpha)<\infty\}$, then $\phi'(\lambda)$ will be understood as the right
derivative of $\phi$ at $\lambda$. Similarly,  if  $\lambda=\sup\{\alpha:\phi(\alpha)<\infty\}$, then $\phi'(\lambda)$ will be the left
derivative of $\phi$ at $\lambda$. 

\begin{lemma}\label{7371} 
For all $\lambda\in\mathbb{R}$ such that $\phi(\lambda)<\infty$ and $\phi'(\lambda)>0$,
\[\lim_{n\rightarrow+\infty}\e(e^{\lambda S_n}\ind_{\{S_n\ge0\}})^{1/n}=\phi(\lambda)\,.\]
\end{lemma}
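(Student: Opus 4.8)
The plan is to prove that $\e(e^{\lambda S_n}\ind_{\{S_n\ge0\}})^{1/n} \to \phi(\lambda)$ for $\lambda$ in the interior (or appropriate boundary) of the convergence strip where $\phi'(\lambda)>0$. The key observation is that this is the exact analogue of Lemma~\ref{2361}, but with the roles reversed: there we controlled the negative part of $S_n$ using a subexponential bound on $\p(S_n<0)$; here the positive moment $\phi(\lambda)$ is the dominant term and we must show the complementary piece $\e(e^{\lambda S_n}\ind_{\{S_n<0\}})$ is negligible relative to $\phi(\lambda)^n$.

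First I would write the decomposition
\[
\e(e^{\lambda S_n}\ind_{\{S_n\ge0\}}) = \phi(\lambda)^n - \e(e^{\lambda S_n}\ind_{\{S_n<0\}})\,,
\]
exactly as in \eqref{5454}, using that $\phi(\lambda)^n = \e(e^{\lambda S_n})$ by independence of the increments. The goal then reduces to showing that $\e(e^{\lambda S_n}\ind_{\{S_n<0\}})/\phi(\lambda)^n \to 0$ (or at least along the relevant regime), so that taking $n$th roots yields $\phi(\lambda)$. When $\lambda\ge 0$ and $S_n<0$ we have $e^{\lambda S_n}\le 1$, so $\e(e^{\lambda S_n}\ind_{\{S_n<0\}})\le \p(S_n<0)$, and the condition $\phi'(\lambda)>0$ should force $\phi(\lambda)\ge 1$ with the strict growth pushing $\phi(\lambda)^n$ above the bounded negative contribution; for $\lambda<0$ one instead estimates the negative part directly and compares exponential rates.

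The cleaner and more robust route, which I expect to be the main technical point, is to identify the exponential rate of the truncated expectation via large deviations. Because $\phi$ is convex and smooth on the interior of its convergence interval, the condition $\phi'(\lambda)>0$ means the tilted mean $\phi'(\lambda)/\phi(\lambda) = \e_\lambda(S_1)$ under the Esscher-tilted measure $d\mu_\lambda = e^{\lambda x}d\mu/\phi(\lambda)$ is strictly positive. Under this tilted law $S_n/n$ converges a.s.\ to a positive constant, so $\p_\lambda(S_n<0)\to 0$, and translating back through the tilting gives
\[
\frac{\e(e^{\lambda S_n}\ind_{\{S_n<0\}})}{\phi(\lambda)^n} = \p_\lambda(S_n<0) \to 0\,.
\]
This is the heart of the argument: the sign constraint $S_n\ge 0$ is automatically satisfied with tilted-probability tending to one precisely when the tilted drift $\phi'(\lambda)$ is positive.

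Combining these, the negative contribution is exponentially smaller than $\phi(\lambda)^n$, whence $\e(e^{\lambda S_n}\ind_{\{S_n\ge0\}})^{1/n} = \phi(\lambda)[1-\p_\lambda(S_n<0)]^{1/n}\to\phi(\lambda)$. The one place requiring care is the boundary of the convergence strip, where $\phi'(\lambda)$ is interpreted as a one-sided derivative and $\mu_\lambda$ may have an infinite second moment or only a one-sided exponential moment; here the strong law under $\p_\lambda$ still applies as long as $\e_\lambda(S_1)=\phi'(\lambda)/\phi(\lambda)\in(0,\infty]$, so the conclusion $\p_\lambda(S_n<0)\to 0$ survives. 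I would therefore expect the boundary case to be the only delicate obstacle, handled by checking that positivity of the (one-sided) derivative guarantees a positive tilted drift and invoking the law of large numbers in that generality.
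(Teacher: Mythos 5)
Your proposal is correct and follows essentially the same route as the paper's own proof: the paper likewise performs the Esscher tilt $\mu_\lambda(dx)=e^{\lambda x}\mu(dx)/\phi(\lambda)$, observes that $\e_\lambda(S_1)=\phi'(\lambda)/\phi(\lambda)>0$ forces the tilted walk to drift to $+\infty$, and concludes via the identity $\p_\lambda(S_n\ge0)=\e(e^{\lambda S_n}\ind_{\{S_n\ge0\}})/\phi(\lambda)^n$ together with $\p_\lambda(S_n\ge0)\to1$. One small caveat: your preliminary aside that $\phi'(\lambda)>0$ ``should force $\phi(\lambda)\ge1$'' is false in general (by convexity with $\phi(0)=1$, one can have $\phi'(\lambda)>0$ with $\phi(\lambda)<1$ just to the right of the minimum of $\phi$), but this plays no role since your main tilting argument, like the paper's, needs no such bound.
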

\begin{proof}
Let $(S^{(\lambda)}_n)$ be a random walk with step distribution 
$\displaystyle\mu_\lambda(dx):=\frac{e^{\lambda x}}{\phi(\lambda)}\mu(dx)$. 
Since 
\[\e(S^{(\lambda)}_1)=\int_{\mathbb{R}}\frac{xe^{\lambda x}}{\phi(\lambda)}\,\mu(dx)=\frac{\phi'(\lambda)}{\phi(\lambda)}>0\,,\]
the random walk $(S^{(\lambda)}_n)$ drifts to $\infty$, so that $\lim_{n\rightarrow\infty}\p(S^{(\lambda)}_n\ge0)=1$. Then the
result follows from the identity
\[\p(S^{(\lambda)}_n\ge0)=\frac{\e(e^{\lambda S_n}\ind_{\{S_n\ge0\}})}{\phi(\lambda)^n}\,.\]
\end{proof}

The following theorem shows that distributions having some negative exponential moments less than 1 or some positive exponential 
moments bigger than 1 belong to class~$\mathscr{C}$. 

\begin{theorem}\label{2258} The knowledge of the measures $\mu^{*n}$, $n\ge1$ restricted to $[0,\infty)$ allows us
to determine if $\phi$ satisfies one of the two following conditions:
\begin{itemize}
\item[$(a)$] There exists $\lambda<0$ such that $\phi(\lambda)<1$.
\item[$(b)$] There exists  $\lambda>0$ such that $\phi(\lambda)\in(1,\infty)$.
\end{itemize}
When at least one of these two conditions holds, the measure $\mu$ 
belongs to the class $\mathscr{C}$. 
\end{theorem}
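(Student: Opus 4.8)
The plan is to prove the two assertions of the theorem in turn: first that each of the conditions $(a)$ and $(b)$ can be read off from the data $\mu^{*n}|_{[0,\infty)}$, $n\ge1$, and then that either condition forces $\mu\in\mathscr{C}$. For the detection of $(b)$ I would work with the quantities $L_n(\lambda):=\int_{[0,\infty)}e^{\lambda x}\,\mu^{*n}(dx)=\e(e^{\lambda S_n}\ind_{\{S_n\ge0\}})$, $\lambda>0$, all computable from the data, and argue that $(b)$ holds if and only if there is some $\lambda>0$ with $L_1(\lambda)<\infty$ and $\limsup_n L_n(\lambda)^{1/n}>1$. Indeed, if $(b)$ holds for a given $\lambda$ then, since $\phi(0)=1<\phi(\lambda)<\infty$ and $\phi$ is convex, the chord inequality gives $\phi'(\lambda)>0$, so Lemma \ref{7371} yields $L_n(\lambda)^{1/n}\to\phi(\lambda)\in(1,\infty)$. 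Conversely, $L_1(\lambda)<\infty$ forces $\phi(\lambda)<\infty$ because $\phi(\lambda)=L_1(\lambda)+\int_{(-\infty,0)}e^{\lambda x}\,\mu(dx)$ and the last integral is at most $1$, while $L_n(\lambda)\le\phi(\lambda)^n$ gives $\phi(\lambda)\ge\limsup_nL_n(\lambda)^{1/n}>1$. For $(a)$ I would argue indirectly: Remark \ref{2699} shows $(a)\Rightarrow(\ref{0236})$, and the validity of $(\ref{0236})$ is visible from $\p(S_n<0)=1-\mu^{*n}[0,\infty)$ as in Theorem \ref{4572}; if $(\ref{0236})$ fails then so does $(a)$, whereas if $(\ref{0236})$ holds then Theorem \ref{4572} already recovers $\mu$ from the data, so whether $(a)$ holds is then determined.

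For the implications into $\mathscr{C}$, the case $(a)$ is immediate: Remark \ref{2699} gives $(\ref{0236})$ and Theorem \ref{4572} yields $\mu\in\mathscr{C}$. The substance is the case $(b)$, which I would reduce to $(a)$ by an exponential (Esscher) tilt. Fixing $\lambda>0$ with $\phi(\lambda)\in(1,\infty)$, set $\mu_\lambda(dx):=\phi(\lambda)^{-1}e^{\lambda x}\mu(dx)$, whose moment generating function is $\phi_\lambda(v)=\phi(\lambda+v)/\phi(\lambda)$. Since $\phi'(\lambda)>0$, the function $\phi$ is strictly increasing just to the left of $\lambda$, so $\phi_\lambda(v)=\phi(\lambda+v)/\phi(\lambda)<1$ for $v<0$ close to $0$; hence $\mu_\lambda$ satisfies condition $(a)$, and by the previous paragraph $\mu_\lambda\in\mathscr{C}$.

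It remains to transfer membership in $\mathscr{C}$ back from $\mu_\lambda$ to $\mu$, which is the heart of the argument. Let $\mu_1\in\mathcal{M}_1$ satisfy $\mu_1^{*n}=\mu^{*n}$ on $[0,\infty)$ for all $n$. As the two measures agree on $[0,\infty)$ we have $L_n(\lambda)=\int_{[0,\infty)}e^{\lambda x}\,\mu_1^{*n}(dx)$; in particular $L_1(\lambda)<\infty$ forces $\phi_1(\lambda)<\infty$, and combining $L_n(\lambda)\le\phi_1(\lambda)^n$ with $L_n(\lambda)^{1/n}\to\phi(\lambda)$ gives $\phi_1(\lambda)\ge\phi(\lambda)>1=\phi_1(0)$, whence $\phi_1'(\lambda)>0$ by convexity; a second application of Lemma \ref{7371}, now to $\mu_1$, yields $\phi_1(\lambda)=\lim_nL_n(\lambda)^{1/n}=\phi(\lambda)$. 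Thus the tilts $\mu_\lambda$ and $\mu_{1,\lambda}(dx):=\phi(\lambda)^{-1}e^{\lambda x}\mu_1(dx)$ use the same normalising constant. Since tilting commutes with convolution, $\mu_{1,\lambda}^{*n}(dx)=\phi(\lambda)^{-n}e^{\lambda x}\mu_1^{*n}(dx)$, which on $[0,\infty)$ equals $\phi(\lambda)^{-n}e^{\lambda x}\mu^{*n}(dx)=\mu_\lambda^{*n}(dx)$; hence $\mu_{1,\lambda}$ and $\mu_\lambda$ share the same convolution powers on $[0,\infty)$. As $\mu_\lambda\in\mathscr{C}$, this forces $\mu_{1,\lambda}=\mu_\lambda$, and dividing by the strictly positive factor $\phi(\lambda)^{-1}e^{\lambda x}$ gives $\mu_1=\mu$, so $\mu\in\mathscr{C}$.

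The main obstacle I anticipate is precisely this last transfer: one must ensure that the tilted problem is genuinely equivalent to the original, namely that the common normalising constant $\phi(\lambda)=\phi_1(\lambda)$ is recoverable from the data (which is why two applications of Lemma \ref{7371} are required, one to $\mu$ and one to $\mu_1$) and that the $[0,\infty)$-data is preserved under the Esscher transform. A secondary subtlety is the detection of $(a)$: because the converse of Remark \ref{2699} is not available, one cannot test $(a)$ directly and must instead route through $(\ref{0236})$ and the reconstruction of $\mu$ supplied by Theorem \ref{4572}.
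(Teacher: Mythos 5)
Your proof is correct, and its first half is essentially the paper's own argument: condition $(a)$ is tested exactly as in the paper, by routing through $(\ref{0236})$ and Theorem \ref{4572}, and condition $(b)$ is detected through the quantities $L_n(\lambda)=\e(e^{\lambda S_n}\ind_{\{S_n\ge0\}})$, the bound $L_n(\lambda)\le\phi(\lambda)^n$, convexity of $\phi$, and Lemma \ref{7371}. Where you genuinely diverge is the implication $(b)\Rightarrow\mu\in\mathscr{C}$. The paper stays with $\mu$ itself: having recovered $\phi(\lambda)=\lim_nL_n(\lambda)^{1/n}$, it uses continuity of $\phi$ on $\{\phi<\infty\}$ to produce a whole interval $I\ni\lambda$ with $\phi(x)\in(1,\infty)$, repeats the limit argument at every $x\in I$ to recover $\phi$ on $I$ from the data, and concludes that $\mu$ is determined --- implicitly invoking the fact that a probability measure is pinned down by its moment generating function on a nondegenerate interval. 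You instead reduce $(b)$ to $(a)$ by an Esscher tilt: $\phi'(\lambda)>0$ gives $\phi(\lambda-h)<\phi(\lambda)$ for small $h>0$, so $\mu_\lambda$ satisfies $(a)$ and lies in $\mathscr{C}$ by the case already proved; and you correctly identify the crux of the transfer, namely that any competitor $\mu_1$ must have the same normalising constant, which your second application of Lemma \ref{7371} (legitimate, since $L_n$ is computed from the shared $[0,\infty)$-data and $\phi_1(\lambda)\ge\lim_nL_n(\lambda)^{1/n}>1$ yields $\phi_1'(\lambda)>0$ by convexity) supplies, after which tilting commutes with convolution, $\mu_{1,\lambda}^{*n}=\mu_\lambda^{*n}$ on $[0,\infty)$, and $\mu_{1,\lambda}=\mu_\lambda$ forces $\mu_1=\mu$. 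The paper's route is shorter but leans on the unstated uniqueness theorem for moment generating functions on an interval; yours is longer but entirely self-contained within the paper's toolkit (Remark \ref{2699}, Theorem \ref{4572}, Lemma \ref{7371}), trading analytic-continuation-type uniqueness for the structural observation that exponential tilting preserves the restricted convolution data.
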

\begin{proof}  From Remark \ref{2699}, if $(\ref{0236})$ does not hold, then $(a)$ is not satisfied. Assume that $(\ref{0236})$ holds.
From Theorem \ref{4572}, the measure $\mu$ belongs to class $\mathscr{C}$ so that we can determine if $(a)$ holds.

From our data, for all $\lambda>0$ and $n\ge1$, the expression $\e(e^{\lambda S_n}\ind_{\{S_n\ge0\}})$ is known. 
Assume that there is $\lambda>0$ such that $\lim_{n\rightarrow+\infty}\e(e^{\lambda S_n}\ind_{\{S_n\ge0\}})^{1/n}>1$.
Since $\e(e^{\lambda S_n}\ind_{\{S_n\ge0\}})^{1/n}\le \phi(\lambda)$, we have actually $\phi(\lambda)>1$. Moreover,
our data clearly allows us to know if $\phi(\lambda)<\infty$. Then since $\phi(0)=1$, by convexity of the function 
$\lambda\mapsto\phi(\lambda)$, it is clear that $\phi'(\lambda)>0$, so that from Lemma \ref{7371}, 
$\lim_{n\rightarrow+\infty}\e(e^{\lambda S_n}\ind_{\{S_n\ge0\}})^{1/n}=\phi(\lambda)$. This means that from our data, we can 
determine if there is $\lambda>0$ such that $\phi(\lambda)\in(1,\infty)$ and in this case
$\lim_{n\rightarrow+\infty}\e(e^{\lambda S_n}\ind_{\{S_n\ge0\}})^{1/n}=\phi(\lambda)$, so that $\phi(\lambda)$  is known.
Moreover, from the continuity of $\phi$ 
on the set $\{x:\phi(x)<\infty\}$, there is an interval $I$ containing $\lambda$ such that for all $x\in I$, $\phi(x)\in(1,\infty)$ and
from the same reasoning as above, $\phi(x)$ is known for all $x\in I$, so that the measure $\mu$ is determined and we conclude
that it belongs to the class $\mathscr{C}$.
\end{proof}
\noindent  We can easily check that condition $(b)$ of Theorem \ref{2258} is satisfied in the two following situations:
\begin{itemize}
\item[$(b_1)$] $\phi(\lambda)<\infty$ for all $\lambda>0$.\\
\item[$(b_2)$] $\mu$ is absolutely continuous in $[0,\infty)$ and its density $f$ 
satisfies $\ln(f(x))\sim -\lambda_0 x$, as $x\rightarrow\infty$, for some $\lambda_0\in(0,\infty)$.
\end{itemize}
Indeed, in case $(b_1)$, if  $\mu\neq\delta_0$ then since $\phi$ is a nondecreasing convex function such that $\phi(0)=1$ 
and since the support of $\mu$ is not included in $(-\infty,0)$, $\lim_{\lambda\rightarrow\infty}\phi(\lambda)=\infty$. 
In case $(b_2)$ it is clear that $\lim_{\lambda\rightarrow\lambda_0-}\phi(\lambda)=\infty$. 

\subsection{Skip free distributions.}\label{skipfree} 

A distribution $\mu$ whose support is included in $\mathbb{Z}$ is said to 
be downward (resp. upward) skip free if $\mu(n)=0$ for all $n\le-2$ (resp. for all $n\ge2$).
Clearly skip free distributions possess exponential moments.  Moreover, upward skip free distributions belong to class 
$\mathscr{C}$ from Theorem \ref{2258} $(b)$ and the note following its proof. Then in this subsection we shall see that the case of 
downward skip free distributions allows us to go a little beyond the cases encompassed by Theorems \ref{4572}  
and \ref{2258}. We first need to make sure that our data allows us determine if the support of a distribution is included in $\mathbb{Z}$. 

\begin{lemma}\label{4261}
The support of the measure $\mu$ is included in $\mathbb{Z}$ if and only if the support of the measures 
$\mu^{*n}$, $n\ge1$ restricted to $[0,\infty)$ is included in $\mathbb{Z}_+$.
\end{lemma}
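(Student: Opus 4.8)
The forward implication is immediate: if $\mu$ is carried by $\mathbb{Z}$, then so is every convolution power $\mu^{*n}$, and hence its restriction to $[0,\infty)$ is carried by $\mathbb{Z}\cap[0,\infty)=\mathbb{Z}_+$. All the content lies in the converse, and my plan is to recover the integrality of $\mu$ on $(-\infty,0)$ from the integrality of the positive parts of the $\mu^{*n}$.

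Write $\mu=\mu^++\mu^-$, where $\mu^+$ and $\mu^-$ are the restrictions of $\mu$ to $[0,\infty)$ and to $(-\infty,0)$ respectively. Taking $n=1$ in the hypothesis already gives that $\mu^+$ is carried by $\mathbb{Z}_+$, so it is a discrete measure $\sum_{j\ge0}p_j\delta_j$; the only remaining task is to show that $\mu^-$ is carried by the negative integers. The basic tool is the binomial decomposition
\begin{equation*}
\mu^{*n}=\sum_{k=0}^n\binom{n}{k}(\mu^+)^{*k}*(\mu^-)^{*(n-k)},
\end{equation*}
in which every summand is a nonnegative measure. Since the whole left-hand side, once restricted to $[0,\infty)$, is carried by $\mathbb{Z}_+$, and since there can be no cancellation between nonnegative measures, each individual term $(\mu^+)^{*k}*(\mu^-)^{*(n-k)}$ must also be carried by $\mathbb{Z}_+$ on $[0,\infty)$.

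I would then single out the term $k=n-1$, namely $(\mu^+)^{*(n-1)}*\mu^-$. Here $(\mu^+)^{*(n-1)}=\sum_\ell q_\ell\,\delta_\ell$ is a discrete measure on $\mathbb{Z}_+$, so this term equals $\sum_\ell q_\ell\,(\delta_\ell*\mu^-)$, a superposition of translates of $\mu^-$ by nonnegative integers. Applying the no-cancellation observation once more, for every atom $\ell$ of $(\mu^+)^{*(n-1)}$ the translate $\delta_\ell*\mu^-$ must be carried by $\mathbb{Z}_+$ on $[0,\infty)$, which is to say that $\mu^-$ restricted to $[-\ell,0)$ is carried by the integers. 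It then suffices to let the probing atoms $\ell$ run off to infinity: if $\mu$ charges $(0,\infty)$ then, $\mu^+$ being carried by $\mathbb{Z}_+$, it has an atom at some integer $m\ge1$, so $(\mu^+)^{*(n-1)}$ has an atom at $(n-1)m$, and letting $n\to\infty$ shows that $\mu^-$ is carried by the negative integers, whence $\mu$ is carried by $\mathbb{Z}$.

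The step I expect to require the most care is precisely this last one: the probing atoms of $(\mu^+)^{*(n-1)}$ reach arbitrarily far to the right only as long as $\mu$ puts some mass strictly inside $(0,\infty)$. In the degenerate situation where $\mu^+=p_0\delta_0$, so that $\mu$ is carried by $(-\infty,0]$, the positive part of each $\mu^{*n}$ collapses to $p_0^n\delta_0$ and carries no information about $\mu^-$; this borderline case lies outside the reach of the translation argument and must be treated separately. Controlling it, and thereby pinning down exactly which support hypotheses on $\mu$ make the converse genuinely hold, is the main obstacle.
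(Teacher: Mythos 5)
Your argument is correct and, at its core, runs on the same mechanism as the paper's: both proofs push the (hypothetically non-integer) negative mass of $\mu$ to the right by convolving with atoms of $\mu$ at positive integers and then invoke the hypothesis on $\mu^{*n}$ restricted to $[0,\infty)$. The paper phrases this probabilistically and by contradiction---if $\mu(I)>0$ for some interval $I\subset(-\infty,0]\setminus\mathbb{Z}_-$, it picks $n\in\mathbb{Z}_+\setminus\{0\}$ with $\mu(\{n\})>0$ and $h$ with $hn+\inf I>0$, and bounds $\mu^{*(h+1)}(hn+I)\ge \mu(I)\mu(\{n\})^h>0$---whereas you reach the same lower bound through the binomial expansion of $(\mu^++\mu^-)^{*n}$ and the no-cancellation property of nonnegative measures; the two routes are interchangeable, yours being a direct (rather than contradiction) formulation.

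The one point where you go beyond the paper is the degenerate case $\mu^+=p_0\delta_0$, which you flag as requiring separate treatment. You are right to worry, and in fact it cannot be repaired: the paper's proof silently assumes the existence of some $n\in\mathbb{Z}_+\setminus\{0\}$ with $\mu(\{n\})>0$, which fails exactly in your degenerate case, and there the lemma as stated is false. Take $\mu=\frac12\delta_0+\frac12\delta_{-1/2}$: its support is not contained in the open half-line $(-\infty,0)$, so the paper's standing assumption holds; every $\mu^{*n}$ restricted to $[0,\infty)$ equals $2^{-n}\delta_0$, hence is carried by $\mathbb{Z}_+$; yet $\mu$ is not carried by $\mathbb{Z}$. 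So the ``separate treatment'' you call for can only consist in strengthening the hypothesis to $\mathrm{supp}\,\mu\cap(0,\infty)\neq\emptyset$ (equivalently, support not contained in $(-\infty,0]$), under which your translation argument---and the paper's---is complete. Your extra caution thus uncovers a genuine, if minor, imprecision in the paper rather than a defect in your own proof.
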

\begin{proof}   The direct implication is obvious. Then assume that the support of $\mu^{*n}$, $n\ge1$  restricted to 
$[0,\infty)$ is included in $\mathbb{Z}_+$, whereas the support of $\mu$ restricted to $(-\infty,0]$ is not included in 
$\mathbb{Z}_-$. 
Then there is an interval $I\subset(-\infty,0]\setminus\mathbb{Z}_-$ such that $\mu(I)>0$. 
Let $n\in\mathbb{Z}_+\setminus\{0\}$ such that $\mu(\{n\})>0$ and $h\in\mathbb{Z}_+\setminus\{0\}$ such that $hn+\inf I>0$.
Then 
\[0< \mu(I)\mu(\{n\})^h=\p(S_1\in I,S_{i+1}-S_i=n,i=1,\dots,h)\le\p(S_{n+1}\in hn+I)\,.\] 
This implies that $\mu^{*(n+1)}(hn+I)>0$, where $hn+I\subset[0,\infty)\setminus\mathbb{Z}_+$, which contradicts the 
assumption. 
\end{proof}

\begin{theorem}\label{8733}
Downward skip free distributions belong to class $\mathscr{C}$. 
\end{theorem}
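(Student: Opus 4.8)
The plan is to exploit the one feature that distinguishes a downward skip‑free walk: since every negative jump of $\mu$ equals $-1$, the walk cannot undershoot the level $0$, so on $\{\tau_-<\infty\}$ one has $S_{\tau_-}=-1$ deterministically. Indeed $S_{\tau_--1}\ge0$ by minimality of $\tau_-$, while $S_{\tau_-}=S_{\tau_--1}+X_{\tau_-}\ge-1$ and $S_{\tau_-}<0$ force $S_{\tau_-}=-1$. Consequently the downward factor degenerates to
\[\chi_-(s,it)=e^{-it}\,\e(s^{\tau_-}\ind_{\{\tau_-<\infty\}})=:g(s)\,e^{-it}\,,\]
whose only unknown is the one–variable generating function $g$. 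First I would recover $g$ from the data: evaluating $(\ref{6283})$ at $t=0$ gives $1-s=(1-g(s))(1-\chi_+(s,0))$, whence $g(s)=1-\frac{1-s}{1-\chi_+(s,0)}$ is expressed through the known factor $\chi_+(s,0)$. Substituting back into $(\ref{6283})$ then gives, for each fixed $s\in(0,1)$,
\[\varphi(t)=\frac1s\Big(1-(1-g(s)e^{-it})(1-\chi_+(s,it))\Big)\,,\]
so that $\varphi$, and hence $\mu$, is computed explicitly from $\chi_+$.

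To obtain genuine membership in $\mathscr{C}$ I then have to rule out any other $\mu_1\in\mathcal{M}_1$ with the same upward factor. By Lemma \ref{4261} such a $\mu_1$ is again carried by $\mathbb{Z}$ and agrees with $\mu$ on $[0,\infty)$, so only the masses $\mu_1(\{-k\})$, $k\ge1$, are at stake, and they sum to $\mu(\{-1\})$; moreover the $t=0$ identity already forces $g_1=g$. Writing the factorization of $\mu_1$ with the same $\chi_+$ and subtracting it from that of $\mu$ yields
\[s(\varphi_1-\varphi)=\big(\chi_-^{(1)}(s,it)-g(s)e^{-it}\big)\,(1-\chi_+(s,it))\,,\]
in which $\varphi_1-\varphi$ and $\chi_-^{(1)}-g e^{-it}$ carry only strictly negative Fourier frequencies (powers of $e^{-it}$), while $1-\chi_+(s,it)$ carries only nonnegative ones and, for $|s|<1$, never vanishes for $t\in\mathbb{R}$ because $|\chi_+(s,it)|\le\e(|s|^{\tau_+})<1$.

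The decisive step is a Wiener--Hopf uniqueness argument. Dividing the last identity by the invertible factor $1-\chi_+$ and projecting onto nonnegative frequencies gives $\Pi_{\ge0}\big((\varphi_1-\varphi)(1-\chi_+(s,\cdot))^{-1}\big)=0$ for every $s\in(0,1)$. Expanding $(1-\chi_+)^{-1}$ through $(\ref{1476})$ as the ascending–ladder renewal series $\sum_{m\ge0}\gamma_m(s)e^{imt}$, whose coefficients $\gamma_m(s)$ are nonnegative because the exponent in $(\ref{1476})$ has nonnegative coefficients, this projection turns into the family $\sum_{k\ge1}\delta_k\gamma_{k+p}(s)=0$, valid for all $p\ge0$ and all $s$, where $\delta_k=\mu_1(\{-k\})-\mu(\{-k\})$ obeys $\delta_1\le0$, $\delta_k\ge0$ for $k\ge2$, and $\sum_k\delta_k=0$. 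I expect this to be the main obstacle: one must show that the positivity of the $\gamma_m(s)$, used across the whole range $s\in(0,1)$ (equivalently, across all convolution powers, not only $\mu^{*2}$), combined with the sign pattern of $(\delta_k)$, forces $\delta_k\equiv0$ and hence $\mu_1=\mu$. Inspecting the leading term in $s$ shows that $\gamma_m(s)\sim\mathbf{1}_{m=0}+s\,\mu(\{m\})$, so the first relation is exactly the one coming from matching $\mu^{*2}$ on $[0,\infty)$ and is already conclusive when $\mu$ charges $(0,\infty)$; the degenerate case where $\mu$ is supported on $(-\infty,0]$ is where the full strength of the positivity of the renewal coefficients must be brought in, and it is the delicate point of the proof.
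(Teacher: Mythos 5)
Your architecture is genuinely different from the paper's, which never forms your Fourier projection: the paper first detects integer support (Lemma \ref{4261}), then splits according to whether $(S_n)$ drifts to $\infty$, handling the drifting case through negative exponential moments (a drifting downward skip free law necessarily satisfies condition $(a)$ of Theorem \ref{2258}) and the non-drifting case through the ladder identity (\ref{5202}), where skip-freeness is equivalent to the renewal sequence satisfying $v(r)=1$ for all $r$. Your preliminary steps are correct: $S_{\tau_-}=-1$ on $\{\tau_-<\infty\}$, the recovery of $g$ from $t=0$ in (\ref{6283}), the non-vanishing of $1-\chi_+(s,it)$ for $|s|<1$, and the reduction to $\sum_{k\ge1}\delta_k\gamma_{k+p}(s)=0$ for all $p\ge0$, $s\in(0,1)$, with $\delta_1\le0\le\delta_k$ $(k\ge2)$ and $\sum_k\delta_k=0$ (the series manipulations are legitimate since $\sum_m\gamma_m(s)=(1-\chi_+(s,0))^{-1}<\infty$). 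But the proof stops exactly at its crux: you never show these relations force $\delta\equiv0$; you only ``expect'' it, and your inversion formula by itself is merely a conditional reconstruction, valid once one already knows $\mu$ is skip free, which is not membership in $\mathscr{C}$. Even your fallback claim --- that the order-$s$ relation $\sum_{k\ge1}\delta_k\mu(k+p)=0$, $p\ge0$, is ``already conclusive when $\mu$ charges $(0,\infty)$'' --- is asserted without proof. It happens to be true, but requires an argument: if $\delta_1=-c<0$, the relations read $\mu(m)=\sum_{j\ge1}w_j\mu(m+j)$ for all $m\ge1$, with $w_j=\delta_{j+1}/c\ge0$ and $\sum_j w_j=1$; summing over $m\ge1$ gives $\sum_j w_j\sum_{i=1}^{j}\mu(i)=0$, and iterating the mean-value relation as $\mu(m)=\e\,\mu(m+J_1+\cdots+J_n)$ with $(J_i)$ i.i.d.\ of law $(w_j)$, then using $\mu(k)\to0$ and dominated convergence, forces $\mu\equiv0$ on $(0,\infty)$, a contradiction; since $\delta_1=0$ alone already yields $\delta\equiv0$ by the sign and sum constraints, uniqueness follows. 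With that lemma supplied, your route closes the entire case $\mu((0,\infty))>0$ using only $\mu$ and $\mu^{*2}$ on $[0,\infty)$ --- strictly less data than the paper's proof consumes, and it even covers situations (positive support equal to $\{1\}$, say) where (\ref{5202}) carries no information about $v$.

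The degenerate case, however, is not a ``delicate point'' that the positivity of the $\gamma_m(s)$ across all $s$ can rescue: it is fatal to any completion of your plan. If $\mu$ is downward skip free with $\mu((0,\infty))=0$, i.e.\ $\mu=a\delta_0+(1-a)\delta_{-1}$ with $a\in(0,1)$, then $\mu^{*n}$ restricted to $[0,\infty)$ is $a^n\delta_0$ for every $n$, and $\mu_1=a\delta_0+(1-a)\delta_{-2}$ produces exactly the same data ($\chi_+(s,it)=as$ for both, by (\ref{1476})); correspondingly $\gamma_m(s)=\ind_{\{m=0\}}(1-as)^{-1}$, so all your relations read $0=0$ and no argument, however clever, can distinguish $\mu$ from $\mu_1$: uniqueness genuinely fails there, and Theorem \ref{8733} has to be read as excluding this degenerate family. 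You may take some comfort in the fact that the paper's own proof shares this blind spot --- when $\p(S_1>r)=0$ for every $r\ge1$, identity (\ref{5202}) also degenerates to $0=0$ and $v$ cannot be read off from it. In summary: your skeleton is sound and, in the non-degenerate case, sharper than the paper's argument, but as submitted the decisive uniqueness step is an acknowledged gap, and the resolution you anticipate for the degenerate case does not exist.
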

\begin{proof} Let $\mu\in \mathcal{M}_1$ whose convolution powers $\mu^{*n}$, $n\ge1$ restricted to $[0,\infty)$ are known.
Then from Lemma \ref{4261} we can determine if the support of $\mu$ is included in $\mathbb{Z}$ or not. Let us assume
that it is the case. 

As already noticed at the beginning of this section, we can determine if $(S_n)$ drifts to $\infty$ or not. 
Assume first that $(S_n)$ drifts to $\infty$. 

Let us observe that under this assumption, if $\mu$ is a downward skip free distribution, then there exists $\lambda<0$ such 
that $\phi(\lambda)<1$. Indeed in this case, it is clear that $\phi(\lambda)<\infty$, for all $\lambda\le0$. Moreover $\e(S_1)$ 
exists and is positive. Since $\e(S_1)=\lim_{\lambda\uparrow0}(1-\phi(\lambda))/\lambda$, there is necessarily $\lambda<0$ 
such that $\phi(\lambda)<1$.

From Theorem \ref{2258} $(a)$, we can determine if there exists $\lambda<0$ such 
that $\phi(\lambda)<1$.  If this is not the case, then $\mu$ cannot be downward skip free. 
On the contrary, if there exists $\lambda<0$ such that $\phi(\lambda)<1$, then from Theorem \ref{2258}, $\mu$ can be determined. 
In particular, we know if $\mu$ is downward skip free or not.

Now assume that $(S_n)$ does not drift to $\infty$ and write for $n\ge0$, 
\begin{eqnarray}
\p(S_{\tau_+}>n,\tau_+<\infty)&=&\sum_{k\ge1}\p(S_1<0,\dots,S_{k-1}<0,S_k-S_{k-1}>n-S_{k-1})\nonumber\\
&=&\sum_{k\ge1}\sum_{r\le-1}\p(S_1<0,\dots,S_{k-2}<0,S_{k-1}=r)\p(S_1>n-r)\nonumber\\
&=&\sum_{r\ge1} v(r)\p(S_1>n+r)\,,\label{5202}
\end{eqnarray}  
where $v(r)=\sum_{k\ge1}\p(S_1\le0,\dots,S_{k-1}\le0,S_{k}=-r)$ is the renewal measure on $\{1,2,\dots\}$ of the (strict) 
downward ladder height process of $(S_n)$, see Chap. XII.2 in \cite{fe}.  In particular, this renewal measure 
satisfies $v(r)\le1$ for all $r\ge1$. Moreover, $(S_n)$ is downward skip free and does not drift to $\infty$, if and only if $v(r)=1$, 
for all $r\ge1$. Since it is the only unknown in Equation (\ref{5202}), we can determine if it is the case or not.  Finally, knowing that 
$\mu$ is downward skip free, we immediately determine this distribution on $\mathbb{R}$ from its knowledge on $[0,\infty)$.
\end{proof}

\noindent It appears in the proof of Theorem \ref{8733} that downward skip 
free distributions which drift to $\infty$ actually satisfy condition $(a)$ of Theorem \ref{2258}. Hence the only additional case in 
this subsection is that of downward skip free distributions which do not drift to~$\infty$.\\

We will denote by $\mathscr{E}$ the set of measures $\mu$ satisfying the assumptions of  of Theorem \ref{2258} or those of 
Theorem \ref{8733}, that is the set of measures satisfying $(a)$ or $(b)$ or downward skip free distributions.  
It will be called the {\it exponential} class. From Theorems \ref{2258} and \ref{8733}, we have $\mathscr{E}\subset\mathscr{C}$. 
Note that from Theorem \ref{4572},  we have determined a subclass of $\mathscr{C}$ which is presumably bigger than 
$\mathscr{E}$.

\section{When $\mu$ is characterized by $\mu$ and $\mu^{*2}$ on $[0,\infty)$.}\label{5673}

\subsection{Preliminary results} We will show that in many cases, the sole data of $\mu$ and $\mu*\mu$ on $[0,\infty)$
actually suffices to determine $\mu$. In this subsection, we give a theoretical condition for this to hold.\\ 

\noindent We first observe that in Conjectures C and C', there is no loss of generality in assuming that $\mu$ is absolutely
continuous on $[0,\infty)$.\\

\noindent {\bf Conjecture C''.} {\it Any absolutely continuous distribution $\mu\in\mathcal{M}_1$ whose support is not included 
in $(-\infty,0)$ is determined by its convolution powers $\mu^{*n}$, $n\ge1$ restricted to $[0,\infty)$.}

\begin{lemma}\label{3459}
Conjectures {\rm C}, {\rm C'} and {\rm C''} are equivalent.
\end{lemma}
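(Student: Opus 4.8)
The plan is to prove the equivalence by a cycle of implications. Clearly C $\Rightarrow$ C' holds by the development \eqref{1476}, which shows that the data of $\chi_+(s,it)$ for $|s|<1$ is equivalent to the data of the measures $\mu^{*n}$, $n\ge1$, restricted to $[0,\infty)$; this equivalence is already recorded in the Introduction and in the definition of $\mathscr{C}$, so these two conjectures are literally the same statement read through \eqref{1476}. Since C'' is just C' restricted to the subclass of absolutely continuous measures, the implication C' $\Rightarrow$ C'' is trivial. The only substantial step is therefore C'' $\Rightarrow$ C', and I expect this to be the main obstacle.

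To prove C'' $\Rightarrow$ C', I would argue by contraposition: suppose C' fails, so there exist distinct $\mu,\mu_1\in\mathcal{M}_1$, with support not contained in $(-\infty,0)$, such that $\mu^{*n}=\mu_1^{*n}$ on $[0,\infty)$ for all $n\ge1$; the goal is to manufacture from them a pair of distinct \emph{absolutely continuous} distributions with the same coincidence property, contradicting C''. The natural device is a smoothing by an auxiliary absolutely continuous probability measure concentrated near the origin. Let $\nu$ be any absolutely continuous distribution and set $\tilde\mu=\mu*\nu$, $\tilde\mu_1=\mu_1*\nu$. Then $\tilde\mu$ and $\tilde\mu_1$ are absolutely continuous, and $\tilde\mu\neq\tilde\mu_1$ because convolution with a fixed $\nu$ is injective (equivalently, their characteristic functions are $\varphi\hat\nu$ and $\varphi_1\hat\nu$, which differ wherever $\hat\nu\neq0$, and $\hat\nu$ does not vanish on a neighborhood of $0$ while $\varphi\neq\varphi_1$). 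Moreover $\tilde\mu^{*n}=\mu^{*n}*\nu^{*n}$ and $\tilde\mu_1^{*n}=\mu_1^{*n}*\nu^{*n}$.

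The crux is to choose $\nu$ so that the coincidence of $\mu^{*n}$ and $\mu_1^{*n}$ on $[0,\infty)$ is \emph{preserved} after convolving with $\nu^{*n}$, i.e.\ so that $\tilde\mu^{*n}=\tilde\mu_1^{*n}$ on $[0,\infty)$ for all $n$. Writing $\rho^{(n)}:=\mu^{*n}-\mu_1^{*n}$, the hypothesis is that $\rho^{(n)}$ is a signed measure supported in $(-\infty,0)$, and we need $\rho^{(n)}*\nu^{*n}$ to vanish on $[0,\infty)$. This will hold provided $\nu$ is supported in $(-\infty,0]$, since then $\nu^{*n}$ is supported in $(-\infty,0]$ and the convolution of a measure supported in $(-\infty,0)$ with one supported in $(-\infty,0]$ is again supported in $(-\infty,0)$. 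So the clean choice is an absolutely continuous $\nu$ supported in $(-\infty,0]$ (for instance a negative-exponential or a uniform density on $[-1,0]$). I must also check that $\tilde\mu$ still has support not contained in $(-\infty,0)$: this follows because $\mu$ charges $[0,\infty)$ and, if $\nu$ charges a neighborhood of $0$, the convolution charges $[0,\infty)$ as well.

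The main obstacle, and the point that needs care, is the joint requirement that convolving with $\nu$ must simultaneously (i) preserve absolute continuity, (ii) destroy the equality $\mu=\mu_1$ — i.e.\ keep $\tilde\mu\neq\tilde\mu_1$ — yet (iii) preserve the coincidence on $[0,\infty)$ at every convolution power. Requirements (ii) and (iii) pull in opposite directions, and the resolution is exactly the support condition $\mathrm{supp}\,\nu\subset(-\infty,0]$ together with injectivity of convolution; once $\nu$ is chosen this way the three requirements are reconciled. Assembling these steps—C $\Leftrightarrow$ C' via \eqref{1476}, the trivial C' $\Rightarrow$ C'', and the smoothing argument for C'' $\Rightarrow$ C'—closes the cycle and yields the equivalence of the three conjectures.
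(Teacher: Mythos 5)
Your overall architecture is exactly the paper's: the paper likewise observes that C $\Leftrightarrow$ C' via \eqref{1476}, that C' $\Rightarrow$ C'' is restriction to a subclass, and it proves C'' $\Rightarrow$ C' by smoothing with an absolutely continuous probability measure carried by $(-\infty,0]$ (it takes a density $g$ on $[0,\infty)$ and passes to $h(x)=\int_{\mathbb{R}}g(y-x)\,\mu(dy)$, i.e.\ convolution with the reflected density $x\mapsto g(-x)$). Your verification that the coincidence of convolution powers on $[0,\infty)$ survives the smoothing, via $\rho^{(n)}\ast\nu^{*n}$ with $\mathrm{supp}\,\nu\subset(-\infty,0]$, is the same computation as the paper's tail identity $\bar{\mu}^{*n}[x,\infty)=\int_0^\infty\mu^{*n}[x+y,\infty)g^{*n}(y)\,dy$. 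So the route is the same; what needs correction is one justification.

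The genuine flaw is your argument that $\tilde\mu\neq\tilde\mu_1$. The blanket claim ``convolution with a fixed $\nu$ is injective'' is false for a general absolutely continuous $\nu$: take $\nu$ with characteristic function $\hat\nu(t)=(1-|t|)^{+}$ (the law with density $(1-\cos x)/(\pi x^2)$), and two \emph{distinct} laws $\mu,\mu_1$ whose characteristic functions agree on $[-1,1]$ (the triangle function and its $2$-periodic extension are both characteristic functions); then $\mu\ast\nu=\mu_1\ast\nu$. Your parenthetical does not repair this: $\varphi\hat\nu$ and $\varphi_1\hat\nu$ differ only where \emph{both} $\varphi\neq\varphi_1$ and $\hat\nu\neq0$ hold, and since $\varphi,\varphi_1$ may coincide on a neighborhood of $0$ and differ only far from the origin, the nonvanishing of $\hat\nu$ near $0$ establishes nothing. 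What saves the argument is precisely the support restriction you already impose for requirement (iii): a probability measure $\nu$ carried by $(-\infty,0]$ has exponential moments, so by Lemma \ref{3024} its characteristic function cannot vanish identically on any interval; hence $\{\hat\nu\neq0\}$ is dense, the set $\{\varphi\neq\varphi_1\}$ is open, and if the latter were nonempty the two sets would intersect, contradicting $\varphi\hat\nu\equiv\varphi_1\hat\nu$. Thus $\varphi=\varphi_1$ on a dense set and, by continuity, everywhere — this is exactly how the paper concludes, and you should invoke Lemma \ref{3024} rather than ``injectivity of convolution.'' (Your check of the support condition is similarly loose: $\mu\ast\nu$ need not charge $[0,\infty)$ — take $\mu=\delta_0$ and $\nu$ uniform on $[-1,0]$ — but if $\nu$ charges every neighborhood of $0$, the closed support of $\tilde\mu$ still meets $[0,\infty)$, which is all that is required.)
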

\begin{proof} We already know from Section \ref{int} that Conjectures C and C' are equivalent. Then clearly, it suffices to prove 
that if Conjecture C'' is true, then Conjecture C' is true. 

Let $\mu,\mu_1\in\mathcal{M}_1$ be any two distributions such that the measures $\mu_1^{*n}$ and $\mu^{*n}$ agree on $[0,\infty)$, 
for  all $n\ge1$. Let $g$ be any probability density function on $[0,\infty)$, i.e.$\int_{0}^\infty g(x)\,{\rm d}x=1$ and let 
$\bar{\mu},\bar{\mu}_1\in\mathcal{M}_1$ be the absolutely continuous measures whose respective densities are $h(x)=\int_{\mathbb{R}} 
g(y-x)\,\mu(dy)$ and $h_1(x)=\int_{\mathbb{R}} g(y-x)\,\mu_1(dy)$, $x\in\mathbb{R}$.

Denoting by $g^{*n}$ the $n$-th convolution product of the function $g$ by itself, it is plain that for all $x\ge0$,
\[\bar{\mu}^{*n}[x,\infty)=\int_0^\infty\mu^{*n}[x+y,\infty)g^{*n}(y)\,dy\;\;\mbox{and}\;\;\bar{\mu}_1^{*n}[x,\infty)=
\int_0^\infty\mu_1^{*n}[x+y,\infty)g^{*n}(y)\,dy\,.\]
Therefore since the measures $\mu^{*n}$ and $\mu_1^{*n}$ agree on $[0,\infty)$, for  all $n\ge1$, the measures  $\bar{\mu}^{*n}$ 
and $\bar{\mu}_1^{*n}$ also agree on $[0,\infty)$, for  all $n\ge1$ and from the assumption that conjecture C'' is true, we conclude that 
the measures $\bar{\mu}$ and $\bar{\mu}_1$ agree on $\mathbb{R}$. Then we can identify both characteristic functions: 
\begin{eqnarray*}\int_{\mathbb{R}}e^{itx}\,\bar{\mu}(dx)&=&\int_{\mathbb{R}}e^{itx}\,\mu(dx)
\int_{0}^\infty e^{-itx}g(x)\,dx\;\;\:\mbox{and}\\
\int_{\mathbb{R}}e^{itx}\,\bar{\mu}_1(dx)&=&\int_{\mathbb{R}}e^{itx}\,\mu_1(dx)\int_{0}^\infty e^{-itx}g(x)\,dx\,.
\end{eqnarray*}
But from Lemma \ref{3024}, the characteristic function of $g$ cannot vanish identically in an interval.
This implies that $\int_{\mathbb{R}}e^{itx}\,\mu(dx)=\int_{\mathbb{R}}e^{itx}\,\mu_1(dx)$, for all $t\in\mathbb{R}$
by continuity of characteristic functions. Then we conclude that $\mu=\mu_1$ on $\mathbb{R}$, from injectivity of the 
Fourier transform. 
\end{proof}

\noindent We derive from Lemma \ref{3459} that there is no loss of generality in assuming that $\mu$ is absolutely continuous on 
$\mathbb{R}$. We will sometimes make this assumption and denote the density of $\mu$ by $f$.

\begin{lemma}\label{2363} For any probability density function, $f$ on $\mathbb{R}$ and for all $t\ge0$,
\begin{equation}\label{2925}
\int_0^\infty f(t+s)\bar{f}(s)\,ds=\frac12\left(f*f(t)-\int_0^tf(t-s)f(s)\,ds\right)\,,
\end{equation}
where $\bar{f}(s)=f(-s)$.
\end{lemma}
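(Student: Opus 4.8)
The plan is to reduce everything to a single change of variables in the convolution integral, exploiting the symmetry of the integrand $f(t-u)f(u)$ about the point $u=t/2$. Since $f$ is a probability density it is nonnegative and integrable, so all the integrals below are well defined and Tonelli's theorem licenses every manipulation; there is therefore no analytic subtlety, only bookkeeping of the limits of integration.

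First I would rewrite the left-hand side of (\ref{2925}) so that both sides are expressed through the same kernel $f(t-u)f(u)$. Substituting $u=-s$ in the integral defining the left-hand side gives
\[
\int_0^\infty f(t+s)\bar{f}(s)\,ds=\int_0^\infty f(t+s)f(-s)\,ds=\int_{-\infty}^0 f(t-u)f(u)\,du\,,
\]
so that the quantity to be computed is simply the restriction of the convolution integrand to the half-line $(-\infty,0)$.

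Next I would split the full convolution according to the three regions determined by $0$ and $t$:
\[
f*f(t)=\int_{-\infty}^\infty f(t-u)f(u)\,du=\int_{-\infty}^0+\int_0^t+\int_t^\infty\,,
\]
where each integrand is understood to be $f(t-u)f(u)$. The middle piece $\int_0^t f(t-u)f(u)\,du$ is exactly the term $\int_0^t f(t-s)f(s)\,ds$ subtracted on the right-hand side of (\ref{2925}), so after the subtraction only the two outer pieces survive.

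Finally I would identify these two outer pieces with the left-hand side. The first, $\int_{-\infty}^0 f(t-u)f(u)\,du$, is precisely the expression obtained in the first step. For the second, the substitution $w=t-u$ maps $(t,\infty)$ onto $(-\infty,0)$ and turns $\int_t^\infty f(t-u)f(u)\,du$ into $\int_{-\infty}^0 f(w)f(t-w)\,dw$, which is the same integral once more. Hence $f*f(t)-\int_0^t f(t-s)f(s)\,ds$ equals twice the left-hand side of (\ref{2925}), and dividing by $2$ yields the claimed identity. The only point requiring minor care is matching the endpoints correctly under the two substitutions $u=-s$ and $w=t-u$; beyond that the argument is a direct computation with no genuine obstacle.
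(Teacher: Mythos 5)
Your proof is correct and follows essentially the same route as the paper: the same three-way decomposition of $f*f(t)$ over $(-\infty,0)$, $(0,t)$, $(t,\infty)$, with changes of variables identifying the two outer pieces with $\int_0^\infty f(t+s)\bar{f}(s)\,ds$. You merely spell out the substitutions and endpoint bookkeeping that the paper leaves implicit, so there is nothing to add.
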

\begin{proof} It suffices to decompose $f*f$ as
\begin{eqnarray*}
f*f(t)&=&\int_{\mathbb{R}}f(t-s)f(s)\,ds\\
&=&\int_{-\infty}^0f(t-s)f(s)\,ds+\int_0^tf(t-s)f(s)\,ds+\int_t^\infty f(t-s)f(s)\,ds\,.
\end{eqnarray*}
Then from a change of variables, we obtain,
$\int_t^\infty f(t-s)f(s)\,ds=\int_{-\infty}^0f(t-s)f(s)\,ds=\int_0^\infty f(t+s)\bar{f}(s)\,ds$, which proves our identity.
\end{proof}

The main idea of this section is to exploit identity (\ref{2925}) in order to characterize the function $\bar{f}$ on $[0,\infty)$
(or equivalently $f $ on $(-\infty,0]$) from the sole data of $f$ and $f*f$ on
$[0,\infty)$. More specifically, assume that $f$ restricted to $[0,\infty)$ fulfills the following property:
for any two nonnegative Borel functions $g_1$ and $g_2$ defined on
$[0,\infty)$ such that $\int_0^\infty f(t+s)g_1(s)\,ds<\infty$, $\int_0^\infty f(t+s)g_2(s)\,ds<\infty$, for all $t\ge0$, the following implication is 
satisfied,
\begin{equation}\label{7326}
\int_{[0,\infty)} f(t+s)g_1(s)\,ds=\int_{[0,\infty)} f(t+s)g_2(s)\,ds\,,\;\;\mbox{for all $t\ge0$}\;\;\Rightarrow \;\;g_1\equiv g_2\,,\;\;\mbox{a.e.}
\end{equation}
Then clearly, the map $t\mapsto\int_{[0,\infty)} f(t+s)\bar{f}(s)\,ds$ characterizes $\bar{f}$ on $[0,\infty)$ and therefore from (\ref{2925}),
 $\mu$ is determined on $\mathbb{R}$ by the sole data of $\mu$ and $\mu^{*2}$ on $[0,\infty)$.

\begin{remark}\label{6922}
There are density functions $f$ which do not satisfy $(\ref{7326})$. For instance 
with $f(s)=\frac12e^{-s}$, the operator  $t\mapsto\int_{0}^\infty f(t+s)\bar{f}(s)\,ds=\frac12e^{-t}\int_{0}^\infty e^{-s}\bar{f}(s)\,ds$ provides 
a very poor information on $\bar{f}$ and certainly cannot characterize this function on $[0,\infty)$. Also if $\mu$ has a bounded 
support in $[0,\infty)$, then clearly the operator $t\mapsto\int_{0}^\infty f(t+s)\bar{f}(s)\,ds$ cannot characterize $\bar{f}$ 
outside this support. 

Note also that from $(\ref{2925})$, the knowledge of  $t\mapsto\int_{0}^\infty f(t+s)\bar{f}(s)\,ds$ and $f(t)$, for $t\ge0$ is equivalent 
to this of the functions $f$ and $f^{*2}$ on $[0,\infty)$. Therefore, in the above examples, $f$ is not even determined 
by the data of $f$ and $f^{*2}$ on $[0,\infty)$.
\end{remark}

\noindent The following proposition gives a sufficient condition for $(\ref{7326})$ to hold. 

\begin{proposition}\label{6412} Assume that $\mu$ is absolutely continuous on $\mathbb{R}$ with density $f$. 
Let us introduce the following set of functions defined on $[0,\infty)$,
\[\mathcal{H}:=\left\{\sum_{k=1}^n\alpha_kf(t_k+\cdot):n\ge1,\alpha_k\in\mathbb{R},\,t_k\ge0\right\}\,.\]
If the restriction of $f$ to $[0,\infty)$ belongs to $L^\infty([0,\infty))$ and if
$\mathcal{H}$ is dense in $L^\infty([0,\infty))$, then for any $g\in L^1([0,\infty))$, the following implication is satisfied,
\begin{equation}\label{7325}
\int_{0}^\infty f(t+s)g(s)\,ds=0,\;\;\mbox{for all $t\ge0$}\;\;\Rightarrow \;\;g\equiv0,\;\;\mbox{a.e.}
\end{equation}
When $(\ref{7325})$ holds, the measure $\mu$ is determined on $\mathbb{R}$ by $\mu$ and $\mu^{*2}$ on $[0,\infty)$. 
\end{proposition}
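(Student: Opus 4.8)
The plan is to read (\ref{7325}) as the assertion that a single bounded linear functional on $L^\infty([0,\infty))$ which vanishes on a dense subspace must vanish identically. Fix $g\in L^1([0,\infty))$ with $\int_0^\infty f(t+s)g(s)\,ds=0$ for every $t\ge0$, and define $\Lambda(h):=\int_0^\infty h(s)g(s)\,ds$ for $h\in L^\infty([0,\infty))$. Since the restriction of $f$ to $[0,\infty)$ lies in $L^\infty$, each translate $f(t+\cdot)$ with $t\ge0$ again belongs to $L^\infty([0,\infty))$, and hence so does every element of $\mathcal H$; thus $\Lambda$ is defined on a space containing $\mathcal H$, and the hypothesis says precisely that $\Lambda$ kills every generator $f(t+\cdot)$ of $\mathcal H$, hence all of $\mathcal H$ by linearity.

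From here I would argue in three short steps. First, $\Lambda$ is continuous for the norm of $L^\infty([0,\infty))$, since $|\Lambda(h)|\le\|h\|_\infty\|g\|_1$ gives $\|\Lambda\|\le\|g\|_1<\infty$. Second, because $\mathcal H$ is dense in $L^\infty([0,\infty))$ and the continuous functional $\Lambda$ vanishes on $\mathcal H$, it vanishes on the whole space. Third, evaluating at $h=\operatorname{sgn}(g)\in L^\infty([0,\infty))$ gives $0=\Lambda(h)=\int_0^\infty|g(s)|\,ds$, so $g\equiv0$ almost everywhere. This establishes (\ref{7325}).

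For the concluding determinacy statement I would feed (\ref{7325}) into identity (\ref{2925}). Under the convention, justified by Lemma \ref{3459}, that $\mu$ and any competitor $\mu_1$ are absolutely continuous with densities $f$ and $f_1$, assume $\mu=\mu_1$ and $\mu^{*2}=\mu_1^{*2}$ on $[0,\infty)$; then $f=f_1$ a.e.\ on $[0,\infty)$ and $f*f=f_1*f_1$ a.e.\ on $[0,\infty)$. Writing (\ref{2925}) for $f$ and for $f_1$ and subtracting, the two right-hand sides agree, because $f*f=f_1*f_1$ on $[0,\infty)$ and the remaining integral $\int_0^t f(t-s)f(s)\,ds$ only involves $f$ on $[0,\infty)$, where $f=f_1$. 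Hence $\int_0^\infty f(t+s)\bar f(s)\,ds=\int_0^\infty f_1(t+s)\bar f_1(s)\,ds$, and replacing $f_1(t+s)$ by $f(t+s)$, legitimate since $t+s\ge0$, yields $\int_0^\infty f(t+s)\bigl(\bar f(s)-\bar f_1(s)\bigr)\,ds=0$ for almost every $t\ge0$, and then for every $t\ge0$ because this map is continuous in $t$ once the translation is transferred onto the integrable factor. As $g:=\bar f-\bar f_1$ is the difference of the restrictions to $[0,\infty)$ of two probability densities, it lies in $L^1([0,\infty))$, so (\ref{7325}) forces $g\equiv0$, i.e.\ $f=f_1$ a.e.\ on $(-\infty,0]$; together with $f=f_1$ on $[0,\infty)$ this gives $\mu=\mu_1$.

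The step I expect to carry the weight is the continuity check in the first item: the functional induced by $g\in L^1$ must be continuous for exactly the topology in which $\mathcal H$ is assumed dense, namely the $L^\infty$-norm topology, and it is this matching of dualities that makes the density hypothesis usable. Once that is in place the argument is purely formal, the only subsidiary care being the measurability and integrability bookkeeping needed to split integrals and to pass from ``almost every $t$'' to ``every $t\ge0$'' in the determinacy step, both of which are harmless here because the kernels are restrictions of probability densities and the relevant map is continuous in $t$.
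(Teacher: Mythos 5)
Your proof is correct and follows essentially the same route as the paper's: the duality argument (a functional induced by $g\in L^1$ vanishing on the dense set $\mathcal{H}$, hence on all of $L^\infty$) for the implication (\ref{7325}), then identity (\ref{2925}) applied to $\mu$ and a competitor to determine $\bar f$ on $[0,\infty)$. You merely make explicit some points the paper dispatches with ``clearly'' --- the norm bound $\|\Lambda\|\le\|g\|_{L^1}$, the test function $\operatorname{sgn}(g)$, and the continuity-in-$t$ upgrade from almost every to every $t\ge0$ --- which is sound and, if anything, slightly more careful than the original.
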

\begin{proof} If $\int_{0}^\infty f(t+s)g(s)\,ds=0$, for all $t\ge0$, then clearly, since $\mathcal{H}$ is dense in $L^\infty([0,\infty))$, 
$\int_{0}^\infty h(s)g(s)\,ds=0$ for all $h\in L^\infty([0,\infty))$ and this implies that $g\equiv 0$, a.e.

Assume now that the restriction of the measures $\mu$ and $\mu^{*2}$ are known on $[0,\infty)$. Recall the notation  
$\bar{f}$ from Lemma \ref{2363} and observe that the right hand side of identity (\ref{2925}) is known for all $t\ge0$. 
From (\ref{7325}) this determines $\bar{f}$ on $[0,\infty)$ and the measure $\mu$ is determined.
\end{proof}
\noindent Unfortunately we do not know any example of function satisfying the condition of Proposition~$\ref{6412}$ and finding a 
simple criterion on $f$ for it to satisfy this condition remains an open problem. More specifically, we may wonder if the converse of
Proposition~$\ref{6412}$ holds, that is if assertion $(\ref{7325})$ implies that $\mathcal{H}$ is dense in $L^\infty([0,\infty))$.
The latter problem can be compared with Wiener's approximation theorem which asserts that for a function $f$ in 
$L^1(\mathbb{R})$ the set $\mathcal{H}$ $($thought as a set of functions defined on $\mathbb{R}$$)$ is dense in 
$L^1(\mathbb{R})$ if and only if the Fourier transform of $f$ does not vanish, see \cite{ru}.\\

\noindent In Subsection \ref{7235}, we give a class of density functions such that (\ref{7326}) holds and in Subsection \ref{3468},
 we give a class of density functions which are bounded on $[0,\infty)$ and such that (\ref{7325}) holds.

\subsection{The completely monotone class}\label{monotone}\label{7235}

In this subsection, we assume that $\mu$ is absolutely continuous with respect to the Lebesgue measure on $\mathbb{R}$ and we
denote by $f$ its density.\\

\noindent We will show that if $f$ restricted to $(a,\infty)$, for some $a\ge0$ is a completely monotone function satisfying some mild 
additional assumption, then $\mu$ is characterized from $\mu$ and $\mu^{*2}$ on $[0,\infty)$. Let us first recall that from Bernstein 
Theorem, the function $f$ is completely monotone on $(a,\infty)$, for $a\ge0$, if and only if there is a positive Borel measure $\nu$ 
on $(0,\infty)$ such that for all $t>a$, 
\begin{equation}\label{1842}
f(t)=\int_0^\infty e^{-ut}\nu(du)\,.
\end{equation}

\begin{theorem}\label{9524} Assume that there is $a\ge0$ such that the restriction of $f$ to $(a,\infty)$ is completely monotone. 
Assume moreover  that the support of the measure $\nu$ in $(\ref{1842})$ contains an increasing sequence $(a_n)$ such that 
$\sum_n a_n^{-1}=+\infty$. Then $(\ref{7326})$ holds and the measure $\mu$ is characterized by the restriction of $\mu$ and 
$\mu^{*2}$ to $[0,\infty)$. In particular, $\mu$ belongs to class $\mathscr{C}$.
\end{theorem}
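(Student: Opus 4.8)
The plan is to verify hypothesis $(\ref{7326})$ directly; once it holds, the discussion preceding the theorem applies: by identity $(\ref{2925})$ the map $t\mapsto\int_0^\infty f(t+s)\bar f(s)\,ds$ is known from the restrictions of $\mu$ and $\mu^{*2}$ to $[0,\infty)$, and $(\ref{7326})$ shows that this map determines $\bar f$ on $[0,\infty)$, hence $\mu$ on $\mathbb{R}$; a fortiori $\mu\in\mathscr{C}$. To establish $(\ref{7326})$, set $g=g_1-g_2$, so that $\int_0^\infty f(t+s)|g(s)|\,ds<\infty$ for every $t\ge0$, and reduce to showing that
\[\int_0^\infty f(t+s)g(s)\,ds=0\quad\text{for all }t\ge0\;\Longrightarrow\;g=0\ \text{a.e.}\]
I would only use these identities for $t>a$, where the Bernstein representation $(\ref{1842})$ of $f$ is available.

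Writing $f(t+s)=\int_0^\infty e^{-u(t+s)}\,\nu(du)$ and applying Tonelli to $|g|$ gives, for $t>a$,
\[\int_0^\infty f(t+s)|g(s)|\,ds=\int_0^\infty e^{-ut}\,\widehat{|g|}(u)\,\nu(du)<\infty,\]
where $\hat g(u)=\int_0^\infty e^{-us}g(s)\,ds$ denotes the Laplace transform. Finiteness forces $\widehat{|g|}(u)<\infty$ for $\nu$-almost every $u$, so $\nu$ is carried by the half-line $[\sigma_a,\infty)$ on which $\hat g$ converges absolutely ($\sigma_a$ being the abscissa of absolute convergence); in particular $\hat g$ is well defined $\nu$-a.e. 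A second, now legitimate, application of Fubini rewrites the hypothesis as $\int_0^\infty e^{-ut}\hat g(u)\,\nu(du)=0$ for all $t>a$. The signed measure $\sigma(du)=\hat g(u)\,\nu(du)$ thus has absolutely convergent, identically vanishing Laplace transform on $(a,\infty)$, whence $\sigma=0$ by injectivity of the Laplace transform, i.e. $\hat g=0$ $\nu$-a.e.

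Next I pass from $\nu$-a.e. vanishing to pointwise vanishing on the prescribed sequence. Since $\hat g$ is analytic on $\{\mathrm{Re}\,u>\sigma_a\}$, in particular continuous on $(\sigma_a,\infty)$, while each $a_n$ with $n\ge2$ satisfies $a_n>\sigma_a$ and lies in the support of $\nu$ (so every neighbourhood of $a_n$ meets the full-measure set $\{\hat g=0\}$), continuity forces $\hat g(a_n)=0$ for all $n\ge2$. On each half-plane $\{\mathrm{Re}\,u\ge\sigma_a+\varepsilon\}$ the function $\hat g$ is bounded by $\widehat{|g|}(\sigma_a+\varepsilon)$ and analytic, so after a Möbius transformation onto the unit disc it becomes a bounded analytic function whose zeros $z_n$ (the images of $a_n$) satisfy $1-|z_n|\asymp a_n^{-1}$. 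The hypothesis $\sum_n a_n^{-1}=+\infty$ therefore makes the Blaschke condition $\sum_n(1-|z_n|)<\infty$ fail; since every nonzero bounded analytic function on the disc must satisfy it, $\hat g\equiv0$, and injectivity of the Laplace transform yields $g=0$ a.e. (If the sequence $(a_n)$ happens to be bounded, its limit is an interior accumulation point of zeros and the identity theorem already gives $\hat g\equiv0$.)

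The main obstacle is this last step: one must secure genuine boundedness and analyticity of $\hat g$ on a right half-plane containing (a cofinite subsequence of) the $a_n$, so that Hardy-space / Blaschke-condition theory applies, and then carry out the conformal computation identifying the divergence $\sum_n a_n^{-1}=+\infty$ precisely with the failure of the Blaschke condition. The accompanying bookkeeping about the abscissa $\sigma_a$ — guaranteeing that $\nu$ does not charge the region where $\hat g$ diverges, so that every Fubini exchange and the evaluations $\hat g(a_n)=0$ are licit — is routine but should be done with care.
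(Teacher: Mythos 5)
Your proof is correct and follows essentially the same route as the paper's: Bernstein's representation plus Fubini converts the hypothesis of $(\ref{7326})$ into the vanishing on $(a,\infty)$ of the Laplace transform of the measure $\hat{g}(u)\,\nu(du)$, whence $\hat{g}$ vanishes $\nu$-a.e.\ and, by continuity, at every point of the support of $\nu$, in particular at the $a_n$, and a M\"untz-type uniqueness theorem finishes. The only divergence is that where the paper simply cites Feller \cite{fe1} for that last step, you prove it yourself via the Blaschke condition for bounded analytic functions on a half-plane (with correct bookkeeping of the abscissa $\sigma_a$, the discarded initial terms, and the bounded-sequence case handled by the identity theorem), which is a sound self-contained substitute; likewise your working with the signed difference $g_1-g_2$ rather than the paper's comparison of the two positive measures $\hat{g}_i(u)\,\nu(du)$ is an immaterial variation.
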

\begin{proof} Let $g$ be any nonnegative Borel function defined on $[0,\infty)$ such that $\int_0^{\infty}f(t+s)g(s)\,ds<\infty$, for all 
$t>a$. Then from Fubini's Theorem, for all $t>a$, 
\begin{eqnarray*} 
\int_0^{\infty}f(t+s)g(s)\,ds&=&\int_0^{\infty}\int_0^\infty e^{-u(t+s)}\nu(du)g(s)\,ds\\
&=&\int_0^{\infty}e^{- ut}\int_0^\infty e^{-us}g(s)\,ds\,\nu(du)\,.
\end{eqnarray*}
This expression is the Laplace transform of the measure $\theta(du):=\int_0^\infty e^{-us}g(s)\,ds\,\nu(du)$. 
The knowledge of this Laplace transform for all $t>a$ characterizes the measure $\theta(du)$ so that a version of the density 
function $u\mapsto \int_0^\infty e^{-us}g(s)\,ds$ is known on a Borel set $B\subset (0,\infty)$ such that $\nu(B^c)=0$. Since this 
density function is continuous, it is known on $\overline{B}$ and hence it is known everywhere on the support of $\nu$. Therefore, 
from the assumption on $\nu$, we can find an increasing sequence $(a_n)$ such that $\sum_n a_n^{-1}=+\infty$ and such that the 
Laplace transform $\int_0^\infty e^{-a_ns}g(s)\,ds$ of the function $g$ at $a_n$ is known for each $n$. 
From a result in \cite{fe1}, this is enough to determine the function $g$ and (\ref{7326}) holds, see also \cite{fe}, p.430. 

Then we derive from (\ref{7326}) and Lemma \ref{2363} that the function $\bar{f}$ is determined on $[0,\infty)$ from the restriction 
on $[0,\infty)$ of $\mu$ and $\mu^{*2}$. Therefore the measure $\mu$ is determined. 
\end{proof}

\noindent We will denote by $\mathscr{M}$ the set of absolutely continuous measures $\mu$ whose density $f$ satisfies the 
assumption of Theorem \ref{9524}. This class will be called the completely monotone class. Theorem \ref{9524} shows that 
$\mathscr{M}\subset\mathscr{C}$.

\begin{remark}
Note that since $f$ is a density function, the measure $\nu$ in $(\ref{1842})$ should 
also satisfy 
\begin{equation}\label{6759}
\int_0^\infty f(t)\,dt=\int_0^\infty \frac{e^{-au}}u\nu(du)\le 1\,.
\end{equation}
\end{remark}

\begin{remark}\label{8644} Clearly class $\mathscr{E}$ is not included in class $\mathscr{M}$. Moreover, it is easy to find an 
example of a measure in class $\mathscr{M}$ which does not belong to class $\mathscr{E}$. 
Indeed, we readily check that whenever the support $S$ of $\nu$ is such that $S\cap(0,\varepsilon)\neq\emptyset$, 
for all $\varepsilon>0$, then $\mu$ has no positive exponential moments, so that it cannot belong to class 
$\mathscr{E}$. Let us take for instance the measure $\nu(du)$ on $[0,\infty)$ with density 
$u^{2}\ind_{\{u\in[0,1]\}}+\frac14u^{-1}\ind_{\{u\in(1,\infty)\}}$. 
This measure satisfies the assumption of Theorem $\ref{9524}$, 
for $a=0$. \\
\end{remark}

\begin{remark}
Theorem $\ref{9524}$ excludes completely monotone functions of the type $f(t)=\sum_{k=1}^n\alpha_ke^{-\beta_k t}$, $t>a$, for 
some $\alpha_k,\beta_k>0$ and some finite $n$ since in this case the measure $\nu(du)=\sum_{k=1}^n\alpha_k\delta_{\beta_k}(du)$ 
does not satisfy the condition required by this theorem. It also excludes functions $f$ whose support is bounded since completely 
monotone functions on $(a,\infty)$ are analytic on $(a,\infty)$.   This remark is consistent with Remark $\ref{6922}$. 
\end{remark}

\subsection{The analytic class}\label{analytic}\label{3468}

Let us assume again that $\mu$ has density $f$ on $\mathbb{R}$. We will now exploit the same kind of arguments as in the previous 
subsection by assuming that $f$ is the Fourier transform of some complex valued function.  

\begin{theorem}\label{3720} Assume that there is a complex valued function $k$ such 
that for all $t\ge0$,
\begin{equation}\label{8179}
f(t)=\int_{\mathbb{R}}e^{iut}k(u)\,du\,.
\end{equation}
Assume moreover that
\begin{itemize}
\item[1.] the absolute moments $M_n=\int_{\mathbb{R}}|u|^n|k(u)|\,du$, $n\ge0$ are finite and satisfy
\[\limsup_n\frac{M_n}{n!}<\infty\,,\]
\item[2.] the function $k$ does not vanish on any interval of $\mathbb{R}$.
\end{itemize}
Then $f$ is bounded on $[0,\infty)$ and $(\ref{7325})$ holds. In particular, the measure $\mu$ is characterized by 
the restrictions of $\mu$ and $\mu^{*2}$ to $[0,\infty)$ and $\mu$ belongs to class $\mathscr{C}$. Moreover $f$ admits an analytic 
continuation on $\mathbb{R}$.
\end{theorem}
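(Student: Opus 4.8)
The plan is to read boundedness and analyticity directly off the representation (\ref{8179}) and the factorial growth of the moments, and then to prove the injectivity statement (\ref{7325}) by transporting the vanishing hypothesis into Fourier space, where condition~2 can be brought to bear. First I would dispose of the two easy assertions. Boundedness is immediate: for $t\ge0$, $|f(t)|\le\int_{\mathbb{R}}|k(u)|\,du=M_0<\infty$. For analyticity, differentiate (\ref{8179}) under the integral sign (legitimate since $|u|^n|k(u)|$ is integrable by hypothesis~1) to get $f^{(n)}(t)=\int_{\mathbb{R}}(iu)^ne^{iut}k(u)\,du$, whence $|f^{(n)}(t)|\le M_n$. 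The condition $\limsup_n M_n/n!<\infty$ furnishes a constant $C$ with $M_n\le C\,n!$ for all large $n$, so at every $t_0\ge0$ the Taylor series of $f$ has radius of convergence at least $1$. This shows $f$ is real-analytic on $[0,\infty)$ and, by the usual overlapping-disk continuation, admits an analytic continuation to $\mathbb{R}$ (in fact to a strip about the real axis).

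To establish (\ref{7325}), take $g\in L^1([0,\infty))$ with $\int_0^\infty f(t+s)g(s)\,ds=0$ for all $t\ge0$. Inserting (\ref{8179}) and applying Fubini — justified since $\int_0^\infty\!\int_{\mathbb{R}}|k(u)||g(s)|\,du\,ds=M_0\|g\|_1<\infty$ — rewrites the hypothesis as
\[
\Phi(t):=\int_{\mathbb{R}}e^{iut}k(u)\,\widehat{g}(u)\,du=0,\qquad t\ge0,
\]
where $\widehat{g}(u)=\int_0^\infty e^{ius}g(s)\,ds$. Set $K:=k\,\widehat{g}$; since $\|\widehat{g}\|_\infty\le\|g\|_1$ and $k\in L^1$, we have $K\in L^1(\mathbb{R})$, and moreover $\int_{\mathbb{R}}|u|^n|K(u)|\,du\le\|g\|_1 M_n\le C'\,n!$ for all large $n$.

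The same moment bound applied to $\Phi$ shows that $\Phi$ is real-analytic on $\mathbb{R}$ with radius of convergence at least $1$ at every point. Since $\Phi$ vanishes on $[0,\infty)$, the identity theorem forces $\Phi\equiv0$ on $\mathbb{R}$; that is, $\int_{\mathbb{R}}e^{iut}K(u)\,du=0$ for all $t\in\mathbb{R}$. Injectivity of the Fourier transform on $L^1$ then gives $K=k\,\widehat{g}=0$ a.e. Now $\widehat{g}$ is continuous, being the Fourier transform of an $L^1$ function. If $\widehat{g}\not\equiv0$, then $\widehat{g}\neq0$ on some open interval, and $k\,\widehat{g}=0$ a.e. would force $k=0$ a.e. on that interval, contradicting condition~2. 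Hence $\widehat{g}\equiv0$, so $g=0$ a.e., which is (\ref{7325}). The remaining assertions — that $\mu$ is characterized by the restrictions of $\mu$ and $\mu^{*2}$ to $[0,\infty)$, and that $\mu\in\mathscr{C}$ — then follow from Proposition~\ref{6412}.

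The crux is the passage from ``$\Phi$ vanishes only on $[0,\infty)$'' to ``$\Phi$ vanishes everywhere'': this is exactly where hypothesis~1 earns its keep, guaranteeing that $\Phi$ (and $f$) are genuinely real-analytic with a uniform radius of convergence, so that the identity theorem applies and the Fourier-inversion argument closes. A secondary delicate point is the final use of condition~2 together with the continuity of $\widehat{g}$; for the contradiction to be clean one should read condition~2 as saying that $k$ is not a.e.\ zero on any interval, i.e.\ that $\{u:k(u)\neq0\}$ has positive measure in every interval.
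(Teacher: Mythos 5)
Your proposal is correct and follows essentially the same route as the paper: Fubini to rewrite $\int_0^\infty f(t+s)g(s)\,ds$ as the Fourier transform of $\widehat{g}\,k$, real-analyticity of that transform from the factorial moment bounds (your uniform derivative bounds $\le \|g\|_{1}M_n\le C'n!$ are exactly the paper's Taylor-remainder estimate), the identity theorem to extend the vanishing from $[0,\infty)$ to $\mathbb{R}$, $L^1$ Fourier injectivity, and then condition~2 together with continuity of $\widehat{g}$ to force $g=0$ a.e., concluding via Proposition~\ref{6412}. Your closing remark that condition~2 must be read as ``$k$ is not a.e.\ zero on any interval'' is a fair clarification that the paper leaves implicit, and your only loose phrase — inferring analyticity of $f$ from ``radius of convergence at least $1$'' — is harmless since the uniform bounds $|f^{(n)}|\le Cn!$ make the Taylor remainder vanish, which is the paper's inequality (\ref{4199}).
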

\begin{proof} The fact that $f$ is bounded follows directly from (\ref{8179}) and $1.$ Let $g\in L^1([0,\infty))$, 
then from (\ref{8179}) and Fubini's theorem, we can write for all $t\ge0$, 
\begin{eqnarray*}
\int_{0}^\infty f(t+s)g(s)\,ds&=&\int_0^{\infty}\int_{\mathbb{R}}e^{iu(t+s)}k(u)\,du\,
g(s)\,ds\\
&=&\int_{\mathbb{R}}e^{iut}\int_0^{\infty}e^{ius}g(s)\,ds\,k(u)\,du\,.
\end{eqnarray*}
Assume that this expression vanishes for all $t\ge0$ and set $\varphi(u)=\int_0^{\infty}e^{ius}g(s)\,ds$. 
This means that the Fourier transform 
\[\Psi(t):=\int_{\mathbb{R}}e^{iut}\varphi(u)\,k(u)\,du\,,\] 
of the function $u\mapsto \varphi(u)k(u)$, $u\in\mathbb{R}$ vanishes for all $t\ge0$. Then let us show that under our assumptions, 
the function $\Psi$ is analytic on the whole real axis. First note that since $|\varphi(u)|\le\|g\|_{L^1}$, $u\in\mathbb{R}$ and since all the 
moments $M_n$ are finite, then $\Psi$ is infinitely differentiable on $\mathbb{R}$ and 
\begin{equation}\label{4168}
\Psi^{(n)}(t)=\int_{\mathbb{R}}(iu)^ne^{iut}\varphi(u)\,k(u)\,du\,,\;\;\;t\in\mathbb{R}\,.
\end{equation}
Then notice that for all $t,u,x\in\mathbb{R}$,
\[\left|e^{iux}\left(e^{itu}-1+\frac{itu}{1!}-\dots-\frac{(itu)^{n-1}}{(n-1)!}\right)\right|\le \frac{|tu|^n}{n!}\,.\]
We derive from this inequality and (\ref{4168}) that 
\begin{equation}\label{4199}
\left|\Psi(x+t)-\Psi(x)-\frac t{1!}\Psi'(x)-\dots-\frac{t^{n-1}}{(n-1)!}\Psi^{(n-1)}(x)\right|\le 
\frac{M_n}{n!}|t|^n\,.
\end{equation}
Set $c=\limsup_nM_n/n!$, then from Stirling's formula, for $|t|<1/(3c)$, the righthand side of (\ref{4199}) 
tends to 0, as $n\rightarrow+\infty$, hence the Taylor series of $\Psi$ converges in some interval around $x$, 
for all $x\in\mathbb{R}$. It follows that $\Psi$ is analytic on $\mathbb{R}$. As a consequence, 
$\Psi$  is determined by its expression on the positive half line. Hence the Fourier transform of the continuous function 
$u\mapsto\varphi(u)k(u)$ vanishes on $\mathbb{R}$, which means that this function vanishes a.e. on $\mathbb{R}$. 
Since $k$ does not vanish on any interval of $\mathbb{R}$ and $\varphi$ is continuous, it implies that $\varphi(u)=0$, 
for all $u\in\mathbb{R}$ and we conclude that $g(t)=0$, for almost every $t\in[0,\infty)$. We have proved that 
$(\ref{7325})$ holds and from the second part of Proposition \ref{6412}, $\mu$ is characterized by the restriction of $\mu$ 
and $\mu^{*2}$ on $[0,\infty)$.    

We have proved above that $\Psi(t)=\int_{\mathbb{R}}e^{iut}\varphi(u)\,k(u)\,du$ is analytic on $\mathbb{R}$. 
It follows from the same arguments that the continuation of $f$ on $\mathbb{R}$ which is defined in a natural way by 
$f(t)=\int_{\mathbb{R}}e^{iut}k(u)\,du$, $t\in\mathbb{R}$ is analytic, which proves the last assertion of the theorem. 
\end{proof}

We will denote by $\mathscr{A}$ the class of distributions which satisfy the assumptions of Theorem \ref{3720}. It will be 
called the analytic class. From Theorem \ref{3720}, $\mathscr{A}\subset\mathscr{C}$.\\  

It is very easy to construct examples of distributions in class $\mathscr{A}$, simply by choosing any symmetric function 
$k$ which satisfies assumptions $1.$ and $2.$ in Theorem \ref{3720}. Let us consider for instance $k(u)=e^{-|u|}$,  for 
$u\in\mathbb{R}$. Then 
\[f(t)=\int_{\mathbb{R}}e^{iut}e^{-|u|}\,du=\frac{1}{2(1+t^2)}\,,\;\;\;t\ge0\,.\]
Any extension on $\mathbb{R}$ of this function into a density function determines a distribution of $\mathscr{A}$ which does 
not belong to classes $\mathscr{E}$ and $\mathscr{M}$. Conversely, none of the classes $\mathscr{E}$ and $\mathscr{M}$ 
is included in $\mathscr{A}$. It is straightforward for $\mathscr{E}$. 
Then let us consider 
\[f(t)=\frac14\left(\int_0^1 e^{-ut}\,u^{1/2}du+\int_1^\infty e^{-ut}\,u^{-1}du\right)\,,\;\;\;t>0\,.\] The measure 
$\nu(du)=\left(u^{1/2}\ind_{[0,1]}(u)+u^{-1}\ind_{[1,\infty)}(u)\right)\,du$ satisfies the 
condition of Theorem \ref{9524} so that $\mu\in\mathscr{M}$ (here we  choose $a=0$). However, 
$\lim_{t\rightarrow0+}f(t)=\infty$, hence it does not admit an analytic continuation on $\mathbb{R}$, so that $f$ does not belong 
to class $\mathscr{A}$, from Theorem \ref{3720}. (Note also that since the support of $\nu$ intersects any interval
$(0,\varepsilon)$, $\varepsilon>0$, the measure $\mu$ has no positive exponential moments, see Remark \ref{8644}.)\\

Here is a consequence of Theorem \ref{3720} on stable distributions. 

\begin{corollary}\label{2885}
Let $S$ be a stable distribution on $\mathbb{R}$ with index $\alpha\in[1,2]$, whose support is not 
included in $(-\infty,0)$. If a measure $\mu$ satisfies $\mu=S$ and $\mu*\mu=S*S$ on $[0,\infty)$ then $\mu=S$ on 
$\mathbb{R}$.
\end{corollary}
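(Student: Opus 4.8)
The plan is to exhibit the density of $S$ as a function of the form $(\ref{8179})$ satisfying hypotheses $1.$ and $2.$ of Theorem \ref{3720}; once that is done, Theorem \ref{3720} immediately yields that $S$ is characterized by the restrictions of $S$ and $S*S$ to $[0,\infty)$, and since by assumption $\mu$ agrees with $S$ and $\mu*\mu$ agrees with $S*S$ on $[0,\infty)$, we conclude $\mu=S$ on $\mathbb{R}$. Thus the entire content of the proof is the verification of the two analytic hypotheses for a stable density.

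First I would write the density $f$ of $S$ via Fourier inversion of its characteristic function $\varphi_S$. Since $S$ is a nondegenerate stable law with index $\alpha\in[1,2]$, one has $|\varphi_S(u)|=e^{-c|u|^\alpha}$ for some $c>0$, so $\varphi_S\in L^1(\mathbb{R})$ and
\[ f(t)=\frac{1}{2\pi}\int_{\mathbb{R}}e^{-iut}\varphi_S(u)\,du=\int_{\mathbb{R}}e^{iut}k(u)\,du,\qquad k(u):=\frac{1}{2\pi}\varphi_S(-u). \]
This is exactly representation $(\ref{8179})$. Hypothesis $2.$ is then immediate: the characteristic function of a stable distribution never vanishes, so $k$ is nowhere zero and a fortiori does not vanish on any interval of $\mathbb{R}$.

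The crux is hypothesis $1.$, the growth bound on the absolute moments $M_n=\int_{\mathbb{R}}|u|^n|k(u)|\,du=\frac{1}{2\pi}\int_{\mathbb{R}}|u|^n e^{-c|u|^\alpha}\,du$. The substitution $v=c|u|^\alpha$ turns this into a Gamma integral, $M_n=\text{const}\cdot c^{-(n+1)/\alpha}\,\Gamma\bigl(\tfrac{n+1}{\alpha}\bigr)$, and Stirling's formula governs the ratio $M_n/n!\asymp \Gamma(\tfrac{n+1}{\alpha})/\bigl[\Gamma(n+1)\,c^{(n+1)/\alpha}\bigr]$. This is precisely where $\alpha\ge1$ enters: when $\alpha>1$ the Gamma quotient decays super-exponentially and overwhelms the geometric factor, so $M_n/n!\to0$ and $\limsup_n M_n/n!<\infty$ holds for every scale $c$; for $\alpha<1$ the same quotient blows up super-exponentially and the hypothesis fails, which is exactly why the statement is restricted to $\alpha\ge1$. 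The borderline case $\alpha=1$ (Cauchy) gives $M_n/n!\asymp c^{-(n+1)}$, bounded only when $c\ge1$; this minor obstacle is removed by observing that membership in $\mathscr{C}$ is invariant under the scaling $x\mapsto\lambda x$, $\lambda>0$, since that map fixes $[0,\infty)$ and commutes with convolution, so we may first rescale $S$ to have scale parameter $\ge1$ and then apply Theorem \ref{3720}. I expect this moment estimate, and in particular making the Stirling comparison precise, to be the only genuine work; the remainder is bookkeeping.
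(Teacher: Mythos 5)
Your proof is correct and follows essentially the same route as the paper: Fourier inversion of the stable characteristic function to obtain the representation $(\ref{8179})$ with $k(u)=\tfrac1{2\pi}\varphi_S(-u)$, nonvanishing of $\varphi_S$ for hypothesis $2.$, and the Gamma-integral estimate $M_n=\mathrm{const}\cdot c^{-(n+1)/\alpha}\,\Gamma\bigl(\tfrac{n+1}{\alpha}\bigr)$ for hypothesis $1.$, which is exactly where $\alpha\ge1$ is used. Your scaling reduction for the boundary case $\alpha=1$ is a genuine improvement in care over the paper's ``we can easily check'': there, when the scale satisfies $c<1$ one has $M_n/n!=\pi^{-1}c^{-(n+1)}\to\infty$, so hypothesis $1.$ literally fails until one rescales as you do, a point the paper's proof glosses over.
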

\begin{proof} Let $f$ be the density of $\mu$ in $[0,\infty)$. From the expression of the characteristic exponent of stable 
distributions and Fourier inverse transform, for all $t\ge0$, 
\[f(t)=\frac1{2\pi}\int_{\mathbb{R}}e^{-itu}e^{-c|u|^\alpha(1-i\beta\mbox{\rm\tiny sgn}(u)\tan(\pi\alpha/2))}\,du\,,\]
where $\beta\in[-1,1]$ and $c$ is some positive constant. Then we can easily check that if $\alpha\in[1,2]$, $f$ satisifes the 
conditions of Theorem \ref{3720}  and the result follows. 
\end{proof} 

\noindent Corollary \ref{2885} should be compared with a result from Rossberg and Jesiak \cite{rj} which asserts that if 
$F_1$ and $F_2$ are the distribution functions of two stable distributions and if $F_1(x)=F_2(x)$ for $x$ belonging to
a set which contains at least three accumulation points, $l_1$, $l_2$ and $l_3$ such that $F_i(l_j)\neq 0,1$ for $i=1,2$
and $j=1,2,3$, then $F_1\equiv F_2$, on $\mathbb{R}$. We stress that in Corollary \ref{2885} it is not even known a priori 
that $\mu$ is an infinitely divisible distribution. It is actually conjectured in \cite{rj} that if $F$ is the distribution
function of an infinitely divisible distribution and satisfies $F(x)=S(x)$, for all $x\ge0$, then $F\equiv S$ on $\mathbb{R}$.
This problem is solved (and proved to be true) only for $\alpha=2$. 

\subsection{A class of discrete distributions}\label{6288}

In this section, we present the discrete counterpart of class $\mathscr{M}$. More specifically, we will 
consider distributions whose support is included in $\mathbb{Z}$.\\

First we need the following equivalent of Lemma \ref{2363} for discrete distributions. Its proof is straightforward
so we omit it. 

\begin{lemma}\label{9742}  Let $(q_n)_{n\in\mathbb{Z}}$ be any probability on $\mathbb{Z}$. Define 
$q^{*2}_n=\sum_{k\in\mathbb{Z}}q_{n-k}q_k$. 
Then for all $n\ge1$ 
\begin{equation}\label{5821}
\sum_{k=-\infty}^0q_{n-k}q_k=\frac12\left(q^{*2}_n-\sum_{k=1}^{n-1}q_{n-k}q_k\right)\,,
\end{equation}
where we set $\sum_{k=1}^{n-1}q_{n-k}q_k=0$ if $n=1$. 
\end{lemma}

A sequence $(a_k)_{k\ge0}$ of nonnegative real numbers is called completely monotone if for all $k\ge0$ and $n\ge1$,
\[\Delta^na_k:=\Delta^{n-1}a_k-\Delta^{n-1}a_{k+1}\ge0\,,\]
where $\Delta^0a=a$. A result from Hausdorff asserts that $(a_k)_{k\ge0}$ is completely monotone if and only if there is a 
finite measure $\nu$ on $[0,1]$ such that for all $k\ge0$,
\begin{equation}\label{3936}
a_k=\int_0^1t^k\,\nu(dt)\,.
\end{equation}

\noindent Let us set $\mu(\{n\})=\mu_n$, for $n\in\mathbb{Z}$. Then by assuming that $(\mu_n)_{n\ge0}$ is completely 
monotone, we obtain a new class of distributions satisfying conjecture $\mathscr{C}$ as shows the following theorem.

\begin{theorem}\label{9756}  Assume that $(\mu_n)_{n\ge0}$ is completely monotone and that the support of the measure 
$\nu$ in the representation $(\ref{3936})$ contains a decreasing sequence $(c_n)$ such that $\sum (-\ln c_n)^{-1}=\infty$. 
Then the data of $\mu$ and $\mu^{*2}$ restricted to $[0,\infty)$ allows us to determine if the support of $\mu$ is included in 
$\mathbb{Z}$. Assume that this is the case, then the measure $\mu$ is determined by the restriction of $\mu$ and $\mu^{*2}$ 
to $[0,\infty)$. In particular $\mu$ belongs to class $\mathscr{C}$. 
\end{theorem}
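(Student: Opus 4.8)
The plan is to follow the architecture of the proof of Theorem~\ref{9524}, replacing the Laplace transform in the variable $t>a$ by the generating function in the power variable supplied by Hausdorff's representation~(\ref{3936}), and replacing Lemma~\ref{2363} by its discrete counterpart Lemma~\ref{9742}. First I would record a consequence of the hypotheses that drives the whole argument: since the support of $\nu$ contains the decreasing sequence $(c_n)$, $\nu$ is not the point mass at $0$ and charges $(0,1]$, whence $\mu_n=\int_0^1t^n\,\nu(dt)>0$ for every $n\ge0$. This is exactly what lets the single measure $\mu^{*2}$ (rather than all the $\mu^{*n}$, as in Lemma~\ref{4261}) detect whether the support of $\mu$ lies in $\mathbb{Z}$. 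Indeed, I would show that the support of $\mu$ is included in $\mathbb{Z}$ if and only if both $\mu$ and $\mu^{*2}$, restricted to $[0,\infty)$, are carried by $\mathbb{Z}_+$, a property visible in the data. The direct implication is clear; for the converse, $\mu|_{[0,\infty)}$ carried by $\mathbb{Z}_+$ fixes the positive part in the atomic form $\sum_{n\ge0}\mu_n\delta_n$, and if the negative part charged some bounded interval $J\subset(-\infty,0)\setminus\mathbb{Z}$, then choosing an integer $n$ with $\mu_n>0$ and $n+\inf J>0$ would force $\mu^{*2}$ to place mass at least $\mu_n\mu(J)>0$ on the non-integer set $n+J\subset[0,\infty)\setminus\mathbb{Z}_+$, a contradiction.

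Assuming now that the support of $\mu$ lies in $\mathbb{Z}$, I would observe that the left-hand side $b_n:=\frac12\big(\mu^{*2}_n-\sum_{k=1}^{n-1}\mu_{n-k}\mu_k\big)$ of~(\ref{5821}) is known for every $n\ge1$ from the data, and by Lemma~\ref{9742} equals $\sum_{j\ge0}\mu_{-j}\mu_{n+j}$. Inserting $\mu_{n+j}=\int_0^1t^{n+j}\,\nu(dt)$ and applying Tonelli's theorem (all terms nonnegative, total mass finite) gives, for all $n\ge1$,
\[b_n=\int_0^1 t^n\,G(t)\,\nu(dt),\qquad G(t):=\sum_{j\ge0}\mu_{-j}t^{\,j},\]
where $G$ is continuous on $[0,1]$, being a power series with nonnegative coefficients of sum $\mu((-\infty,0))\le1$.

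To finish I would recover $\mu$ in two steps. The numbers $b_n$, $n\ge1$, are the Hausdorff moments of the finite measure $\theta(dt):=G(t)\,\nu(dt)$; since two measures on $[0,1]$ agreeing in all moments of order $\ge1$ differ at most by an atom at $0$ (invisible to every $t^n$, $n\ge1$), $\theta$ is determined on $(0,1]$. As $\nu$ is itself determined by the known moments $\mu_n=\int_0^1t^n\,\nu(dt)$, $n\ge0$, the density $G=d\theta/d\nu$ is determined $\nu$-almost everywhere, hence, by continuity of $G$, on the whole support of $\nu$; in particular $G(c_n)$ is known for each $n$. Then, writing $c_n=e^{-s_n}$ with $s_n:=-\ln c_n$, the value $G(c_n)=\sum_{j\ge0}\mu_{-j}e^{-js_n}$ is the Laplace transform at $s_n$ of the measure $\rho:=\sum_{j\ge0}\mu_{-j}\delta_j$ on $\{0,1,2,\dots\}$, and $(s_n)$ is increasing with $\sum_n s_n^{-1}=\sum_n(-\ln c_n)^{-1}=\infty$. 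By the uniqueness result of \cite{fe1} already used in Theorem~\ref{9524}, these values determine $\rho$, hence all the weights $\mu_{-j}$; together with the known $\mu_n$, $n\ge0$, this determines $\mu$ on $\mathbb{Z}$, so $\mu\in\mathscr{C}$.

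The step I expect to be the crux is the last one: turning the known values of the analytic generating function $G$ at the points $c_n$ into knowledge of the individual coefficients $\mu_{-j}$. The substitution $t=e^{-s}$ is what converts $\sum(-\ln c_n)^{-1}=\infty$ into exactly the $\sum s_n^{-1}=\infty$ hypothesis of the Laplace-transform uniqueness theorem, which is why the peculiar sum condition in the statement is precisely the right one. One must also take mild care that only moments of order $n\ge1$ are at hand, but as noted this costs only a harmless atom of $\theta$ at the origin, and the points $c_n$ lie in $(0,1]$ where $G$ has already been pinned down.
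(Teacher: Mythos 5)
Your proposal is correct and follows essentially the same route as the paper's proof: detecting integer support through the positivity of every $\mu_n$ (forced by the support assumption on $\nu$) and a two-step convolution contradiction, then using Lemma~\ref{9742} and Fubini to recognize the known quantities as the moments of $G(t)\,\nu(dt)$ with $G(t)=\sum_{k\ge0}\mu_{-k}t^k$, invoking moment determinacy on $[0,1]$ and continuity of $G$ on the support of $\nu$, and finally applying Feller's M\"untz-type uniqueness result \cite{fe1} via the substitution $t=e^{-s}$. Your additional care about only having moments of order $n\ge1$ (the harmless atom at $0$) and about first recovering $\nu$ from the Hausdorff moments $\mu_n$ before dividing to get $G$ are refinements of steps the paper leaves implicit, not a different argument.
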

\begin{proof} First let us observe that we can derive from the data of 
of $\mu$ and $\mu^{*2}$ restricted to $[0,\infty)$ that the support of $\mu$ is included in $\mathbb{Z}$. Indeed, assume 
that  the support of $\mu$ restricted to $(-\infty,0]$ is not included in $\mathbb{Z}_-$. Then there is an interval
$I\subset(-\infty,0]\setminus\mathbb{Z}_-$ such that $\mu(I)>0$. From (\ref{3936}), $\mu(\{n\})>0$ for all $n\ge1$. 
Let $n\in\mathbb{Z}_+\setminus\{0\}$ such that $n+\inf I>0$, then 
\[0< \mu(I)\mu(\{n\})=\p(S_1\in I,S_{2}-S_1=n)\le\p(S_{2}\in n+I)\,.\] 
This implies that $\mu^{*2}(n+I)>0$, where $n+I\subset[0,\infty)\setminus\mathbb{Z}_+$, which contradicts the assumption. 

From the Haussdorff representation recalled above, there is a unique finite measure $\nu$ on [0,1] such that 
$\mu_k=\int_0^1t^k\,\nu(dt)$. Using this representation and Fubini's theorem, we can write for all $n\ge0$,
\[\sum_{k=0}^\infty \mu_{-k}\mu_{n+k}=\int_0^1t^n\left(\sum_{k=0}^\infty \mu_{-k}t^k\right)\,\nu(dt)\,.\]
From (\ref{5821}) in Lemma \ref{9742} 
applied to $(\mu_k)$, we derive that this expression is determined from the knowledge of $\mu$ and $\mu^{*2}$ 
on  $[0,\infty)$.  This means that we know the moments of the measure $\left(\sum_{k=0}^\infty \mu_{-k}t^k\right)\cdot\nu(dt)$. 
This measure is finite and its support is included in $[0,1]$, hence it is determined by its moments. Then we know 
the generating function $t\mapsto \sum_{k=0}^\infty \mu_{-k}t^k$ of the sequence $(\mu_{-k})_{k\ge0}$ on the support of $\nu$, 
since this function is continuous. From the assumption, we know this generating function on a sequence $(c_n)$ such that 
$\sum (-\ln c_n)^{-1}=\infty$. This is enough to determine the sequence $(\mu_{-k})_{k\ge0}$, from \cite{fe1}.

We conclude that the measures $\mu$ and $\mu^{*2}$ restricted to $[0,\infty)$ allow us to determine $\mu$ on $\mathbb{Z}$.
\end{proof}

The set of measures satisfying the assumptions  of Theorem \ref{4261} will be called the discrete monotone class and will be denoted by 
$\mathscr{M}_d$. Theorem \ref{4261} shows that $\mathscr{M}_d\subset \mathscr{C}$. Moreover, it is clear that none of the classes 
$\mathscr{E}$, $\mathscr{M}$ and  $\mathscr{A}$ is included in $\mathscr{M}_d$ and that these classes do not contain  $\mathscr{M}_d$.

\section{When $\mu$ is infinitely divisible}\label{2482}

The aim of this section is to present a problem equivalent to conjecture  $\mathscr{C}$ in the framework of infinitely divisible distributions. 
When $\mu$ is infinitely divisible, the Wiener-Hopf factorization  can be understood in two different ways: we can either factorize the 
characteristic function $\varphi$ as in (\ref{6283}), or we can factorize the characteristic exponent $\psi$, which is defined by 
\[\varphi(t)=e^{\psi(t)}\,,\;\;\;t\in\mathbb{R}\,.\]
Then let us recall the Wiener-Hopf factorization in the latter context.
Let $(X_t,\,t\ge0)$ be a real L\'evy process issued from 0 under the probability $\p$ and such that $X_1$ has law $\mu$ 
under this probability, that is $\e(e^{iuX_t})=e^{-t\psi(u)}$, for all $t\ge0$. The  characteristic exponent of $\mu$ is
given explicitly according to the L\'evy-Khintchine formula by
\[\psi(u)=iau+\frac{\sigma^2}2u^2+\int_{\mathbb{R}\setminus\{0\}}(1-e^{iux}+iux1_{\{|x|\le1\}})\,\Pi(dx)\,,\]
where $a\in\mathbb{R}$, $\sigma\ge0$ and $\Pi$ is a measure on $\mathbb{R}\setminus\{0\}$, such that
$\int(x^2\wedge 1)\,\Pi(dx)<\infty$. Then the Wiener-Hopf factorization 
of $\psi$ has the following form:
\begin{equation}\label{6228}
s+\psi(u)=\kappa_+(s,-iu)\kappa_-(s,iu)\,,\;\;\;u\in\mathbb{R}\,,\;\;s\ge0\,,
\end{equation}
where $\kappa_+$ and $\kappa_-$ are the Laplace exponents of the upward and downward ladder processes
$(\tau^+,H^+)$ and $(\tau^-,H^-)$ of $X$, that is $\e(e^{-\alpha\tau_t^{+/-}-i\beta H_t^{+/-}})=e^{-t\kappa_{+/-}(\alpha,\beta)}$. These
exponents are given explicitly for $\alpha,\beta\ge0$ by the identities,
\begin{eqnarray}
\kappa_-(\alpha,\beta)&=&k_-\exp\left(\int_0^\infty\int_{(-\infty,0)}(e^{-t}-e^{-\alpha t-\beta x})\frac1t\p(X_t\in dx)\,dt\right)\\
\kappa_+(\alpha,\beta)&=&k_+\exp\left(\int_0^\infty\int_{[0,\infty)}(e^{-t}-e^{-\alpha t-\beta x})\frac1t\p(X_t\in dx)\,dt\right)\,,\label{2324}
\end{eqnarray}
where $k_-$ and $k_+$ are positive constants depending on the normalization of the local times at the infimum and at the supremum of $X$.
The joint law of $(\tau_1^{+},H_1^{+})$ is the continuous time counterpart of the joint law $(\tau_{+},S_{\tau_{+}})$ 
defined in Section \ref{int}, in the setup of random walks. We refer to Chap. VI of \cite{be}, Chap. IV of \cite{ky} or Chap. of \cite{do} for 
complete definitions of these notions. Note that our formulation of the Wiener-Hopf factorization (\ref{6228}) includes compound Poisson 
processes since expression (\ref{2324}) takes account of a possible mass at 0 for the measure $\p(X_t\in dx)$. This slight extension can be 
derived from p. 24 and 25 of \cite{vi}, see also the end of Section 6.4, p.183 in \cite{ky}.

Set $\overline{\Pi}(t)=\Pi(t,\infty)$, $t>0$ and let $\mu_t$ be the law of $X_t$.

\begin{lemma}\label{4237}
The data of $\kappa_+$ is equivalent to that of $\mu_t$ on $[0,\infty)$, for all $t\ge0$.
Moreover, the knowledge of $\kappa_+$ allows us to determine the function $\overline{\Pi}(t)$, $t>0$.
\end{lemma}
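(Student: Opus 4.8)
The plan is to reduce both assertions to a statement about the double Laplace transform, in the time variable $t$ and the space variable $x$, of the restriction of $\mu_t$ to $[0,\infty)$. Write $\hat\mu_t(\beta):=\e(e^{-\beta X_t}\ind_{\{X_t\ge0\}})=\int_{[0,\infty)}e^{-\beta x}\,\mu_t(dx)$ for the partial Laplace transform of $X_t$ on the nonnegative half-line. For each fixed $t$ the map $\beta\mapsto\hat\mu_t(\beta)$ is the Laplace transform of the restriction of $\mu_t$ to $[0,\infty)$, so by injectivity of the Laplace transform of measures, knowing $\hat\mu_t(\beta)$ for all $\beta\ge0$ is the same as knowing $\mu_t$ on $[0,\infty)$. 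One direction of the equivalence is then immediate: formula $(\ref{2324})$ expresses $\kappa_+$ explicitly as a functional of the measures $\p(X_t\in dx)$ on $[0,\infty)$, so the knowledge of $\mu_t$ on $[0,\infty)$ for every $t$ yields $\kappa_+$.

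For the reverse direction, I first read off the normalizing constant: setting $\alpha=1$, $\beta=0$ in $(\ref{2324})$ makes the integrand vanish identically, so $\kappa_+(1,0)=k_+$ is known. Thus the function
\[g(\alpha,\beta):=\log\frac{\kappa_+(\alpha,\beta)}{\kappa_+(1,0)}=\int_0^\infty\frac1t\Bigl(e^{-t}\mu_t[0,\infty)-e^{-\alpha t}\hat\mu_t(\beta)\Bigr)\,dt\]
is determined by $\kappa_+$. The crucial step is to differentiate in $\alpha$: the singular factor $1/t$ is cancelled by the factor $t$ produced by $\partial_\alpha e^{-\alpha t}$, giving
\[\frac{\partial_\alpha\kappa_+(\alpha,\beta)}{\kappa_+(\alpha,\beta)}=\partial_\alpha g(\alpha,\beta)=\int_0^\infty e^{-\alpha t}\,\hat\mu_t(\beta)\,dt.\]
The differentiation is legitimate because, for $\alpha_1>\alpha_2$, Fubini's theorem rewrites $g(\alpha_1,\beta)-g(\alpha_2,\beta)$ as $\int_{\alpha_2}^{\alpha_1}\int_0^\infty e^{-\alpha t}\hat\mu_t(\beta)\,dt\,d\alpha$, the inner integral being finite since $\hat\mu_t(\beta)\le1$.

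The right-hand side above is the Laplace transform in $\alpha$ of the function $t\mapsto\hat\mu_t(\beta)$, which is right-continuous in $t$ by stochastic continuity of $X$; injectivity of the Laplace transform recovers $\hat\mu_t(\beta)$ for every $t>0$ and every $\beta\ge0$, and then inversion in $\beta$ recovers $\mu_t$ on $[0,\infty)$ for all $t$, completing the equivalence. For the second assertion, once $\mu_t$ on $[0,\infty)$ is known, in particular the tails $\p(X_t>x)=\mu_t(x,\infty)$ for all $x>0$ and $t>0$ are known. I then invoke the standard small-time behaviour of a L\'evy process, namely that $t^{-1}\p(X_t\in\cdot)$ converges vaguely on $\mathbb{R}\setminus\{0\}$ to the L\'evy measure $\Pi$ as $t\downarrow0$; hence for every continuity point $x>0$ of $\overline\Pi$ one has $\overline\Pi(x)=\lim_{t\downarrow0}t^{-1}\p(X_t>x)$, and since $\overline\Pi$ is right-continuous and non-increasing this determines $\overline\Pi(t)$ for all $t>0$.

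I expect the main obstacle to be the reverse direction of the equivalence, and within it the identification after differentiating in $\alpha$: one must justify the exchange of differentiation and integration and verify that the resulting object is genuinely a double Laplace transform to which the uniqueness theorem applies. The cancellation of the $1/t$ singularity is precisely what makes this work, so it is the heart of the argument; the two Laplace inversions and the vague-convergence result invoked for the tail are then standard.
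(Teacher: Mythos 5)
Your argument is correct in substance and reaches both assertions, but the key (reverse) direction is done by a genuinely different route than the paper's. The paper recovers $\mu_t$ on $[0,\infty)$ from $\kappa_+$ via the disintegration identity
\[
\frac1t\p(X_t\in dx)\,dt=\int_0^\infty\p(\tau_u\in dt,H_u\in dx)\,\frac{du}{u}\,,\qquad x\ge0,\ t>0\,,
\]
(equation (\ref{1263}), quoted from Section 5.2 of \cite{do}), which expresses the measure $\frac1t\p(X_t\in dx)\,dt$ directly in terms of the bivariate ladder law, itself determined by $\kappa_+$; no differentiation or Laplace inversion in the time variable is needed. You instead take logarithms in Fristedt's formula (\ref{2324}) and differentiate in $\alpha$, arriving at the classical identity $\partial_\alpha\log\kappa_+(\alpha,\beta)=\int_0^\infty e^{-\alpha t}\hat\mu_t(\beta)\,dt$, and then invert two Laplace transforms. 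Your Tonelli justification of the differentiation (rewriting $g(\alpha_1,\beta)-g(\alpha_2,\beta)$ as an integral in $\alpha$ of the nonnegative inner integral, bounded by $1/\alpha$) is sound, and your route has the merit of being self-contained given (\ref{2324}), whereas the paper's rests on an external identity that delivers the conclusion in one stroke and, as the paper notes, is stated so as to cover compound Poisson processes. The easy direction and the second assertion coincide with the paper's (formula (\ref{2324}) read forwards, and vague convergence of $t^{-1}\p(X_t\in\cdot)$ to $\Pi$ from Exercise 1 of Chapter I of \cite{be}); your remark that monotonicity and right-continuity of $\overline{\Pi}$ upgrade convergence at continuity points to determination everywhere is actually more careful than the paper.

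One small caveat: your appeal to stochastic continuity for the right-continuity of $t\mapsto\hat\mu_t(\beta)$ is not airtight, because $x\mapsto e^{-\beta x}\ind_{\{x\ge0\}}$ is discontinuous at $0$ and a compound Poisson process with drift can have $\p(X_t=0)>0$; for instance $X_t=N_t-t$ with $N$ a Poisson process fails right-continuity of $\hat\mu_t(\beta)$ at integer times, where the right limit drops by $\p(X_t=0)$. This only affects countably many $t$ in the compound Poisson case (for all other L\'evy processes the law of $X_t$ is atomless and your argument stands), so the Laplace inversion still identifies $\hat\mu_t(\beta)$ for a.e.\ $t$ and the exceptional points can be handled by one-sided limits; note that the paper's own identity likewise determines $\frac1t\p(X_t\in dx)$ only up to a Lebesgue-null set of times a priori, so this (harmless) subtlety is shared by both proofs.
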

\begin{proof}
From identity
\begin{equation}\label{1263}
\frac1t\p(X_t\in dx)\,dt=\int_0^\infty\p(\tau_u\in dt,H_u\in dx)\,\frac{du}u\,,\;\;x\ge0,t>0\,,
\end{equation}
which can be found in Section 5.2 of \cite{do}, we see that the law of $X_t$ in $[0,\infty)$, for all $t\ge0$ is determined
by the law of $(\tau,H)$ and hence by $\kappa^+$. (Note that equation (\ref{1263}) is also valid for compound Poisson processes.) 
Conversely, it follows directly from formula (\ref{2324}) that $\kappa^+$ is determined by the data of the measure $\mu_t$ on $[0,\infty)$, 
for all $t\ge0$.

The second assertion is a consequence of the first one and Exercise~1 of chap.I in \cite{be}, which asserts that the
family of measures $\frac1t\p(X_t\in dx)$ converges vaguely toward $\Pi$, as $t\rightarrow0$.
\end{proof}

\noindent The above lemma enables us to make the connection between the two Wiener-Hopf factorizations 
(\ref{6283}) and (\ref{6228}). Let us state it more specifically in the following proposition. 

\begin{proposition}\label{3783}
The Wiener-Hopf factor $\kappa_+$ allows us to determine the Wiener-Hopf factor $\chi_+$.
\end{proposition}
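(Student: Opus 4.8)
The plan is to bridge the two factorizations through the convolution powers of $\mu$, combining Lemma \ref{4237} with the series expansion (\ref{1476}) that characterizes $\chi_+$. The key observation is that since $(X_t,t\ge0)$ is a L\'evy process, the sequence $(X_n)_{n\ge0}$ obtained by sampling it at integer times is precisely a random walk whose step distribution is $\mu=\mu_1$, the law of $X_1$. Indeed, writing $X_n=\sum_{k=1}^n(X_k-X_{k-1})$ exhibits $X_n$ as a sum of $n$ independent copies of $\mu$, so $X_n\ed S_n$ and the law $\mu_n$ of $X_n$ coincides with the $n$-fold convolution $\mu^{*n}$ for every integer $n\ge1$. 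Thus the integer-time skeleton of $X$ is exactly the random walk $(S_n)$ of Section \ref{int}, and the factor $\chi_+$ appearing in (\ref{6283}) is its upward space-time Wiener-Hopf factor.

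With this identification in place, I would first apply Lemma \ref{4237}: the data of $\kappa_+$ is equivalent to that of $\mu_t$ restricted to $[0,\infty)$, for all $t\ge0$. Restricting this continuous family to integer times yields the knowledge of $\mu_n=\mu^{*n}$ on $[0,\infty)$ for every $n\ge1$. Finally, invoking the expansion (\ref{1476}),
\[\log\frac1{1-\chi_+(s,it)}=\sum_{n=1}^\infty\frac{s^n}n\int_{[0,\infty)}e^{itx}\mu^{*n}(dx)\,,\]
the right-hand side is entirely determined by the measures $\mu^{*n}$ restricted to $[0,\infty)$; hence $\chi_+(s,it)$ is determined for $|s|<1$ and $t\in\mathbb{R}$, and then for $|s|\le1$ by continuity. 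Chaining these implications, $\kappa_+$ determines $\chi_+$, which is the assertion.

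The argument is a change of viewpoint rather than a computation, so there is no serious analytic obstacle. The only point requiring care is the passage from the continuous-time information carried by $\kappa_+$ (equivalently, the whole family $\mu_t$, $t\ge0$) to the discrete-time information $\mu^{*n}$, $n\ge1$: this is legitimate precisely because of the L\'evy property $\mu_n=\mu_1^{*n}$, which allows one to restrict to integer times without loss. One should also recall that equation (\ref{1263}) underlying Lemma \ref{4237} remains valid for compound Poisson processes, so that the conclusion holds in that case as well.
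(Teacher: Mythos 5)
Your proof is correct and follows essentially the same route as the paper: apply Lemma \ref{4237} to recover $\mu_n=\mu^{*n}$ on $[0,\infty)$ at integer times, then use the expansion (\ref{1476}) from Section \ref{int} to reconstruct $\chi_+$. You merely make explicit two points the paper leaves implicit, namely the identification of the integer-time skeleton of $X$ with the random walk $(S_n)$ and the extension from $|s|<1$ to $|s|\le1$ by continuity.
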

\begin{proof}
The result is straightforward from Lemma \ref{4237}. Indeed, knowing $\kappa_+$ we can determine $\mu_n=\mu^{*n}$ restricted 
to $[0,\infty)$, for all $n\ge1$ and from Section \ref{int} that this data is equivalent to that of $\chi_+$. 
\end{proof}

\begin{definition}
We will denote by $\mathscr{C}_i$ the class of infinitely divisible distributions $\mu$ which are determined by the data of their 
upward Wiener-Hopf factor $\kappa_+(s,t)$, for $s,t\ge0$ or equivalently by the data of the measures $\mu_t$, 
$t>0$ restricted to $[0,\infty)$. 
\end{definition}

\noindent Let us denote by $\mathscr{I}$ the set of infinitely divisible distributions. Then it is straightforward 
that $\mathscr{I}\cap\mathscr{C}\subset\mathscr{C}_i$. In particular if Conjecture C is true, then $\mathscr{C}_i=\mathscr{I}$.
It was proved in Chapter 4 of \cite{vi} that infinitely divisible distributions having some exponential moments belong to class $\mathscr{C}_i$, 
which is a consequence of our results. The latter work uses a different technique based on the analytical continuation of the Wiener-Hopf 
factors $\kappa_+$ and $\kappa_-$. \\

Let $k_-,\delta_-,\gamma_-$ and $k_+,\delta_+,\gamma_+$ be the killing rate, the drift and the L\'evy measure of 
the subordinators $H_-$ and $H_+$, respectively and let us set 
$\bar{\gamma}_+(x)=\gamma_+(x,\infty)$ and $\bar{\gamma}_-(x)=\gamma_-(x,\infty)$. Let also
 $U_-$ be the renewal measure of the  downward ladder height process $H^-$, that is $U_-(dx)=\int_0^\infty\p(H_t^-\in dx)$.

\begin{theorem}\label{3920}  Assume that the function $t\in(a,\infty)\mapsto \overline{\Pi}(t)$ is completely monotone, for some $a\ge0$,
that is there exists a Borel measure $\nu$ on $(0,\infty)$ such that for all $t>a$,
\begin{equation}\label{9422}
\overline{\Pi}(t)=\int_0^\infty e^{-ut}\,\nu(du)\,.
\end{equation}
Assume moreover that the support of $\nu$ contains an increasing sequence $(a_n)$ such that $\sum_{n}a_n^{-1}=+\infty$.
Then the measure $\mu$ belongs to the class $\mathscr{C}_i$.
\end{theorem}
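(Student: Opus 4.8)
The plan is to reduce the infinitely divisible case to the completely monotone density result already established in Theorem \ref{9524}, but applied at the level of the Lévy measure rather than the density of $\mu$ itself. Since the data of $\kappa_+$ is, by Lemma \ref{4237}, equivalent to the data of $\mu_t$ restricted to $[0,\infty)$ for all $t\ge0$, and moreover determines the upper tail $\overline{\Pi}(t)$, $t>0$, the strategy is to show that the hypotheses on $\overline{\Pi}$ let us recover the whole Lévy measure $\Pi$, hence $\psi$, hence $\mu$. The key structural observation is that the relationship between $\Pi$ on $[0,\infty)$ and $\Pi$ on $(-\infty,0)$ is governed by a convolution-type identity analogous to (\ref{2925}), coming from the Wiener-Hopf factorization (\ref{6228}) of the exponent, via the ladder-process objects $k_\pm,\delta_\pm,\gamma_\pm$ and the renewal measure $U_-$.

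First I would record what Lemma \ref{4237} gives us for free: from $\kappa_+$ we know $\overline{\Pi}(t)$ for all $t>0$, and we know $\mu_t$ on $[0,\infty)$ for all $t\ge0$, which is equivalent to knowing $\kappa_+$ itself, i.e. knowing the upward ladder data $k_+,\delta_+,\gamma_+$. The substantive step is to extract the downward part of $\Pi$ on $(-\infty,0)$ from this data. I would exploit the factorization (\ref{6228}) together with the Vigon équation amicale (the Lévy-measure analogue of the Wiener-Hopf identity), which expresses $\bar\gamma_+$ and $\overline{\Pi}$ in terms of $\bar\gamma_-$ convolved against the downward renewal measure $U_-$; schematically $\overline{\Pi}(x)=\int_{[0,\infty)}\bar\gamma_+(x+y)\,U_-(dy)+(\text{contribution from }\gamma_-)$. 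Under the completely monotone assumption (\ref{9422}) on $\overline{\Pi}$, with $\overline{\Pi}(t)=\int_0^\infty e^{-ut}\nu(du)$ and $\nu$ supported on a set containing an increasing sequence $(a_n)$ with $\sum_n a_n^{-1}=+\infty$, I would run exactly the Laplace-transform argument from the proof of Theorem \ref{9524}: complete monotonicity turns the unknown downward object into a measure whose Laplace transform is known on the support of $\nu$, and the divergence condition $\sum_n a_n^{-1}=+\infty$ invokes the uniqueness result of \cite{fe1} to recover that object uniquely on $[0,\infty)$.

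With the downward ladder Lévy measure $\gamma_-$ (and hence $\bar\gamma_-$) pinned down, the downward factor $\kappa_-$ is determined, and then (\ref{6228}) gives $\psi$ on $\mathbb{R}$, equivalently the tail $\Pi$ on $(-\infty,0)$; combined with the already-known $\overline{\Pi}$ this yields the full Lévy measure, and together with the Gaussian part $\sigma^2$ and drift $a$ (which are read off from the known upward data and the factorization) this determines $\mu$. I expect the main obstacle to be the careful bookkeeping in the convolution identity linking $\overline{\Pi}$, $\bar\gamma_\pm$, and $U_-$: one must verify that complete monotonicity of $\overline{\Pi}$ transfers to the right integrated object so that Bernstein's representation and the Laplace-inversion uniqueness apply cleanly, and one must handle the possible atom at $0$ of $\p(X_t\in dx)$ (the compound Poisson case) and the drift/killing terms $\delta_\pm,k_\pm$ without losing information. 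Once that identity is set up in the completely monotone framework, the inversion is a direct transcription of the argument already used for Theorem \ref{9524}.
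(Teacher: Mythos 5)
Your plan is essentially the paper's proof: the paper also starts from Lemma \ref{4237} (so that $\kappa_+$ yields both $\bar\gamma_+(x)$, $x>0$, and $\overline{\Pi}(t)$, $t>0$), then uses Vigon's \'equation amicale invers\'ee
\[
\bar{\gamma}_+(x)=\int_{[0,\infty)}U_-(dy)\,\overline{\Pi}(x+y)\,,\qquad x>0\,,
\]
plugs in the Bernstein representation (\ref{9422}), applies Fubini to get $\bar\gamma_+(x)=\int_0^\infty e^{-xz}\hat U_-(z)\,\nu(dz)$, reads off $\hat U_-$ on the support of $\nu$ by continuity, uses the M\"untz-type condition $\sum_n a_n^{-1}=+\infty$ and \cite{fe1} to pin down $\hat U_-(z)$ for all $z>0$, hence $\kappa_-(0,z)=1/\hat U_-(z)$, and concludes via the factorization (\ref{6228}). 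So the reduction to the Laplace-transform argument of Theorem \ref{9524}, with the \'equation amicale playing the role of identity (\ref{2925}), is exactly what the paper does.

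Two points in your write-up need correction, one of which is substantive if taken literally. First, your displayed schematic has the identity the wrong way round: you wrote $\overline{\Pi}(x)=\int_{[0,\infty)}\bar\gamma_+(x+y)\,U_-(dy)+(\text{contribution from }\gamma_-)$, which puts $\bar\gamma_+$ in the position of the shifted kernel. With that orientation the complete monotonicity hypothesis sits on the left-hand side, not inside the integral, and the extraction you describe (``the unknown downward object becomes a measure whose Laplace transform is known on $\mathrm{supp}\,\nu$'') cannot be run --- there is nothing to Fubini. The correct \'equation amicale invers\'ee places the completely monotone function $\overline{\Pi}$ inside as the kernel, exactly matching the structure of Theorem \ref{9524} with $U_-(dy)$ as the unknown measure; since your surrounding prose describes precisely that mechanism, this is a repairable transcription slip rather than a wrong idea, but as displayed the key step fails. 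Second, what is recovered is the renewal measure $U_-$, not the ladder L\'evy measure $\gamma_-$: via $\hat U_-(z)=1/\kappa_-(0,z)$ one obtains the full downward exponent $\kappa_-(0,\cdot)$ --- drift $\delta_-$ and killing $k_-$ included --- in one stroke, and then $\psi(u)=\kappa_+(0,-iu)\,\kappa_-(0,iu)$ determines $\mu$ directly. Consequently the extra bookkeeping you anticipated (separately recovering $\gamma_-$, $\sigma^2$ and the drift $a$, or treating the compound Poisson atom at $0$ beyond what Lemma \ref{4237} already handles) is unnecessary.
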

\begin{proof} 
The proof relies on Vigon's \'equation amicale invers\'ee, see \cite{vi}, p.71, or (5.3.4) p.44 in \cite{do} which can be written as
\begin{equation}\label{4581}
\bar{\gamma}_+(x)=\int_{[0,\infty)}U_-(dy)\overline{\Pi}(x+y)\,,\;\;\;x>0\,.
\end{equation}
Note that (\ref{4581}) is analoguous to (\ref{5202}). From Lemma \ref{4237}, given $\kappa_+$, we know both $\bar{\gamma}_+(x)$, 
for $x>0$ and $\overline{\Pi}(t)$, for $t>0$. Then we will show that under our assumption, equation (\ref{4581}) allows us to determine 
the renewal measure $U_-(dy)$, so that the law of $X$ will be entirely determined, thanks to the relation:
\begin{eqnarray}
\hat{U}_-(z)&=&\int_\mathbb{R_+}e^{-yz}U_-(dy)\nonumber\\
&=&\frac1{\kappa_-(0,z)}\label{3471}\,,\;\;\;z>0\,,
\end{eqnarray}
and the Wiener-Hopf factorization (\ref{6228}).

From (\ref{9422}), (\ref{4581}) and Fubini's Theorem, we can write for all $x>0$,
\begin{eqnarray}
\bar{\gamma}_+(x)&=&\int_{[0,\infty)}U_-(dy)\int_0^\infty e^{-(x+y)z}\,\nu(dz)\nonumber\\
&=&\int_0^\infty e^{-xz}\hat{U}_-(z)\,\nu(dz)\,.\label{4925}
\end{eqnarray}
Then the left hand side of equation (\ref{4925}) determines the measure $\hat{U}_-(z)\,\nu(dz)$.
Since $z\mapsto\hat{U}_-(z)$ is a continuous function, then it is determined on the support of $\nu$. From our assumption 
on this support and \cite{fe1} we derive that $\hat{U}_-$ (and hence $\kappa^-(0,z)=-\log \mathbb{E}(e^{-z H^-_1})$) 
for $z>0$, is determined.
\end{proof}

Note that an analogous result to Theorem \ref{3920}  holds when $\mu$ has support in $\mathbb{Z}$ and the 
sequence $\Pi(n)$, $n\ge1$ 
satisfies the same assumptions as $(\mu_n)_{n\ge0}$ in Theorem \ref{9756}. One may also wonder if an assumption such as 
(\ref{8179}) for $\overline{\Pi}(t)$ would lead to a similar result to Theorem \ref{3720}. However, in order to use the same 
argument as in the proof of this theorem together with equation (\ref{4581}), we need $\hat{U}_-(z)$ to be bounded, which is 
not the case in general.\\

\begin{remark}
As already observed above, the class $\mathscr{C}_i$ contains at least all probability measures in the set
$\mathscr{I}\cap(\mathscr{E}\cup\mathscr{M}\cup\mathscr{A}\cup\mathscr{M}_d)$ but Theorem $\ref{3920}$ shows that there 
are other distributions in $\mathscr{C}_i$. Indeed, it is easy to construct an example of a compound Poisson process $(X_t,\,t\ge0)$
with intensity $1$, whose L\'evy measure $\Pi$ satisfies conditions of Theorem $\ref{3920}$ but such that the law 
$\mu(dx)=e^{-1}\sum_{n\ge0}\Pi^{*n}(dx)/{n!}$ of $X_1$ does not belong to any of the classes 
$\mathscr{E}$, $\mathscr{M}$, $\mathscr{A}$ and $\mathscr{M}_d$.
\end{remark}

An infinitely divisible distribution is said to be downward skip free (respectively upward skip free) if the support of the measure $\Pi$ is 
included in $(-\infty,0]$ (respectively in $[0,\infty)$). Upward skip free distributions clearly belong to class $\mathscr{C}_i$ from 
the Wiener-Hopf factorization (\ref{6228}). Then here is a counterpart of Theorem \ref{8733}. 

\begin{theorem}\label{2455}  
Downward skip free infinitely divisible distributions belong to the class $\mathscr{C}_i$.
\end{theorem}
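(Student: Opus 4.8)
The plan is to follow the proof of Theorem~\ref{8733}, using Vigon's \'equation amicale invers\'ee (\ref{4581}) in place of the renewal identity (\ref{5202}), and to split the argument according to the asymptotic behaviour of $X$. First I would note that, by Lemma~\ref{4237}, the data $\kappa_+$ is equivalent to the knowledge of the measures $\mu_t$ on $[0,\infty)$, hence of $\p(X_t<0)=1-\mu_t[0,\infty)$ for every $t>0$; as at the beginning of Section~\ref{int}, the continuous-time analogue of Spitzer's criterion then lets us decide whether $X$ drifts to $+\infty$, drifts to $-\infty$, or oscillates. I would also record the structural fact driving everything: a downward skip free process has no negative jumps, hence creeps downwards, so that its descending ladder height process $H^-$ is a pure drift.

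Suppose first that $X$ drifts to $+\infty$. Since $X$ has no negative jumps, the left tail of $X_1$ is light, so $\phi(\lambda)=\e(e^{\lambda X_1})<\infty$ for all $\lambda\le0$, and drifting to $+\infty$ forces $\e(X_1)>0$. As $\phi$ is convex with $\phi(0)=1$ and $\phi'(0)=\e(X_1)>0$, one has $\phi(\lambda)<1$ for $\lambda<0$ close to $0$, i.e. condition~$(a)$ of Theorem~\ref{2258} holds. Hence $\mu\in\mathscr{E}\subset\mathscr{C}$, and being infinitely divisible, $\mu\in\mathscr{I}\cap\mathscr{C}\subset\mathscr{C}_i$. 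This is the exact counterpart of the first case of Theorem~\ref{8733}.

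Now suppose that $X$ does not drift to $+\infty$, so that $\liminf_tX_t=-\infty$ and $H^-$ is a non-killed pure drift, say $H^-_t=ct$. Then $U_-(dy)=c^{-1}\,dy$ on $[0,\infty)$---the continuous analogue of the identity $v(r)=1$ used in Theorem~\ref{8733}---and by (\ref{3471}) this reads $\kappa_-(0,z)=cz$. By Lemma~\ref{4237}, $\kappa_+$ furnishes both $\overline{\Pi}$ and $\bar{\gamma}_+$, so (\ref{4581}) becomes $\bar{\gamma}_+(x)=c^{-1}\int_x^\infty\overline{\Pi}(z)\,dz$ for $x>0$, and I would recover $c$ as the ($x$-independent) quotient $\int_x^\infty\overline{\Pi}(z)\,dz/\bar{\gamma}_+(x)$. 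Thus $\kappa_-(0,\cdot)$ is known, and the Wiener-Hopf factorization (\ref{6228}) at $s=0$, namely $\psi(u)=\kappa_+(0,-iu)\kappa_-(0,iu)=c\,iu\,\kappa_+(0,-iu)$, determines $\psi$ and hence $\mu$.

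The hard part will be this last case. I would need to treat separately the degenerate subcase $\overline{\Pi}\equiv0$ (so that $\mu$ is Gaussian with nonpositive drift), where (\ref{4581}) is vacuous but $\mu$ is read off directly from the Gaussian marginals $\mu_t$ on $[0,\infty)$, and to check that $\bar{\gamma}_+\not\equiv0$ and $\int_x^\infty\overline{\Pi}(z)\,dz<\infty$ so that the quotient above is finite and nonzero. Justifying rigorously that $H^-$ is a non-killed pure drift precisely when $X$ does not drift to $+\infty$, and that this quotient is genuinely independent of $x$, is the step I expect to demand the most care.
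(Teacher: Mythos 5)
Your first case (drift to $+\infty$, reduction to condition $(a)$ of Theorem~\ref{2258} together with $\mathscr{I}\cap\mathscr{C}\subset\mathscr{C}_i$) is sound and parallels the first case of Theorem~\ref{8733}. The genuine gap is in your second case, and it is not where you expected the difficulty. Membership in $\mathscr{C}_i$ requires that \emph{any} infinitely divisible $\mu'$ with the same factor $\kappa_+$ --- hence, by Lemma~\ref{4237}, the same $\bar{\gamma}_+$ and the same $\overline{\Pi}$ --- coincide with $\mu$. Such a $\mu'$ is not known a priori to be downward skip free, so you may not assume its descending ladder height is a pure drift. All the data give you about $\mu'$ is its own \'equation amicale invers\'ee, $\bar{\gamma}_+(x)=\int_{[0,\infty)}U'_-(dy)\overline{\Pi}(x+y)$, and the fact that this left-hand side also equals $c^{-1}\int_x^\infty\overline{\Pi}(z)\,dz$ does \emph{not} force $U'_-(dy)=c^{-1}dy$: the map $U\mapsto\int\overline{\Pi}(x+y)U(dy)$ is not injective in general. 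This non-injectivity is the central obstruction of the entire paper (compare Remark~\ref{6922}: if $\overline{\Pi}(t)=e^{-t}$, the integral only sees the single number $\hat{U}'_-(1)$), and it is precisely why Theorem~\ref{3920} must assume $\overline{\Pi}$ completely monotone before inverting (\ref{4581}). So your quotient correctly computes $c$ for the true law, but gives no handle on $\kappa'_-(0,\cdot)$, and $\psi'$ is not pinned down. Note also that the rescue available in the discrete case of Theorem~\ref{8733} --- there $v(r)\le1$ universally, so testing $v\equiv1$ in (\ref{5202}) is a one-signed-deviation test --- has no continuous-time analogue: nothing dominates $U'_-$ by $c^{-1}$ times Lebesgue measure (for instance, if $\mu'$ has a negative compound Poisson component, $U'_-$ carries an atom at $0$).

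The paper circumvents exactly this by using the \emph{direct} \'equation amicale rather than the inverse one: $\overline{\Pi}(x)=\int_0^\infty\gamma_+(x+du)\bar{\gamma}_-(u)+\delta_-\gamma_+(x)+k_-\bar{\gamma}_+(x)$, in which the unknown negative-jump contribution $\bar{\gamma}_-$ enters as an additive \emph{nonnegative} term tested against known quantities: $\overline{\Pi}$, $\gamma_+$ and $\bar{\gamma}_+$ come from $\kappa_+$ via Lemma~\ref{4237}, and $k_-$ comes from the factorization (\ref{6228}) at $u=0$, which determines $\kappa_-(s,0)$ for all $s\ge0$. One then checks whether $\overline{\Pi}(x)=\delta_-\gamma_+(x)+k_-\bar{\gamma}_+(x)$, $x>0$, holds for some constant $\delta_-$; this is a detection criterion for spectral positivity valid for \emph{every} law consistent with the data, with no drift dichotomy needed, after which $\delta_-$, hence $\kappa_-(0,\beta)$ and then $\psi$, are determined along the lines you indicate. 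As written, your case 2 establishes only that two downward skip free laws with the same $\kappa_+$ coincide, which is strictly weaker than the theorem; to repair it you must supply a step certifying skip-freeness from the data, which is what the direct \'equation amicale provides.
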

\begin{proof} The proof relies on Vigon's \'equation amicale, p.71 in \cite{vi}. See also equation (5.3.3), p.44 in \cite{do}. 
If $\delta_->0$, then from \cite{vi}, the L\'evy measure $\gamma_+$ is absolutely continuous and we will denote by $\gamma_+(x)$ its 
density. Then Vigon's \'equation amicale can be written as 
\[\overline{\Pi}(x)=\int_0^\infty\gamma_+(x+du)\bar{\gamma}_-(u)+\delta_-\gamma_+(x)+k_-\bar{\gamma}_+(x)\,,\;\;\;x>0\,.\]
It is plain that in the right hand side, the term $\int_0^\infty\gamma_+(x+du)\bar{\gamma}_-(u)$ is identically 0 if and only if the L\'evy 
process $X$ is spectrally positive, that is $\mu$ is downward skip free. Moreover, (\ref{6228}) for $u=0$ entails that the knowledge of 
$\kappa_+$ implies that of $\kappa_-(s,0)$, for all $s\ge0$. In particular, we know the killing rate of the subordinator $(\tau_t^-,t\ge0)$, 
and this killing rate is the same as this of $(H_t^-,t\ge0)$, that is $k_-$.  
Then we conclude that $X$ is spectrally positive if and only if there is a constant $\delta_-$ such that 
\[\overline{\Pi}(x)=\delta_-\gamma_+(x)+k_-\bar{\gamma}_+(x)\,,\;\;\;x>0\,,\]
and this can be determined, since from our data, we know $k_-$, $\overline{\Pi}(x)$ and $\bar{\gamma}_+(x)$, for $x>0$.
\end{proof}

\section{More classes of distributions}\label{2450}

In the previous sections, we have highlighted the subclasses $\mathscr{E}$, $\mathscr{M}$, $\mathscr{M}_d$ and $\mathscr{A}$ of 
$\mathscr{C}$ and proved that these sets of distributions are distinct  from each other. More specifically, none of them is included into 
another one. Then the aim of this section is to show that some of these classes can be substantially enlarged through simple 
arguments.\\

Actually for most of the subclasses investigated in this paper, we imposed conditions bearing only on $\mu$ restricted to $[0,\infty)$, 
but one is also allowed to make assumptions on $\mu^{*n}$ restricted to $[0,\infty)$. In order to move in this direction, let us mention 
the following straightforward extension of results of Section \ref{5673}.

\begin{proposition}\label{3555} Let $\mu\in\mathcal{M}_1$ be absolutely continuous with density $f$. 
If there is $n\ge1$ such that the density function $f^{*n}$ satisfies the same conditions as $f$ in Theorems $\ref{9524}$ or in
Theorem $\ref{3720}$, then $\mu$ is determined by $\mu^{*n}$ and $\mu^{*2n}$ restricted to $[0,\infty)$. In particular $\mu$ 
belongs to class $\mathscr{C}$. 
\end{proposition}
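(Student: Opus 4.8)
The plan is to reduce Proposition~\ref{3555} to the results already established in Section~\ref{5673}, namely Theorems~\ref{9524} and~\ref{3720}, by observing that passing from $\mu$ to $\mu^{*n}$ merely shifts all the indices by a factor of $n$. The key structural fact is that the map sending a distribution to its convolution powers is multiplicative in the exponent: if we write $\eta:=\mu^{*n}$, then $\eta^{*k}=\mu^{*nk}$ for every $k\ge1$. In particular the data of $\eta$ and $\eta^{*2}$ restricted to $[0,\infty)$ is exactly the data of $\mu^{*n}$ and $\mu^{*2n}$ restricted to $[0,\infty)$, which is what the proposition takes as its hypothesis.

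First I would set $\eta:=\mu^{*n}$ and note that $\eta\in\mathcal{M}_1$ is absolutely continuous with density $f^{*n}$. The hypothesis states precisely that this density satisfies the conditions imposed on $f$ in Theorem~\ref{9524} (complete monotonicity on some $(a,\infty)$ together with the support condition on the associated Bernstein measure $\nu$) or in Theorem~\ref{3720} (the Fourier-transform representation with the moment growth bound and the non-vanishing of $k$). Applying the relevant theorem to $\eta$ in place of $\mu$ yields that $\eta$ is characterized by the restrictions of $\eta$ and $\eta^{*2}$ to $[0,\infty)$. By the multiplicativity noted above, this means $\mu^{*n}$ is determined on all of $\mathbb{R}$ by the restrictions of $\mu^{*n}$ and $\mu^{*2n}$ to $[0,\infty)$.

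The remaining step is to recover $\mu$ from $\mu^{*n}$, now known on all of $\mathbb{R}$. At the level of characteristic functions this amounts to extracting $\varphi$ from $\varphi^{n}$; since $\varphi$ is continuous with $\varphi(0)=1$, one selects the continuous $n$-th root branch that equals $1$ at the origin, and this determines $\varphi$ uniquely, hence $\mu$ by injectivity of the Fourier transform. I expect this root-extraction to be the only point requiring a word of care, but it is standard and causes no genuine obstacle because $\varphi^n$ being known as a full characteristic function pins down $\varphi$ unambiguously.

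The main obstacle, such as it is, lies not in any calculation but in correctly matching the hypotheses: one must check that the conditions of Theorems~\ref{9524} and~\ref{3720} are genuinely conditions on the density of a single probability measure and therefore apply verbatim to $\eta=\mu^{*n}$. Once the translation $\mu\rightsquigarrow\mu^{*n}$ is made, the proposition is immediate, so the proof is essentially a bookkeeping argument invoking the earlier theorems and the uniqueness of the continuous $n$-th root of a characteristic function.
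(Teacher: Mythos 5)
Your reduction is the right one, and it matches what the paper intends (the paper states the proposition without proof, as a ``straightforward extension'' of Theorems \ref{9524} and \ref{3720} applied to $\eta=\mu^{*n}$, whose convolution powers satisfy $\eta^{*k}=\mu^{*nk}$). Up to the point where you conclude that $\mu^{*n}$ is determined on all of $\mathbb{R}$, your argument is correct. The genuine gap is the last step: recovering $\mu$ from $\mu^{*n}$ by taking ``the'' continuous $n$-th root of $\varphi^{n}$ normalized by the value $1$ at the origin. Such a root is unique only on the connected component of the open set $\{t:\varphi(t)\neq0\}$ containing $0$; on every other component continuity leaves the branch free, and positive definiteness does not restore uniqueness. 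In fact convolution roots of probability measures are genuinely non-unique: with $\Delta(t)=\max(1-|t|,0)$, the functions
\[
\varphi_{\pm}(t)=\Delta(t)\pm\tfrac12\bigl[\Delta(t-2)+\Delta(t+2)\bigr]
\]
are both characteristic functions (their densities are $\frac{1-\cos x}{\pi x^{2}}\,(1\pm\cos 2x)\ge0$, of total mass $\varphi_{\pm}(0)=1$), they are continuous, equal to $1$ at $0$, and satisfy $\varphi_{+}^{2}=\varphi_{-}^{2}$, yet $\varphi_{+}\neq\varphi_{-}$. So the implication ``$\mu_{1}^{*n}=\mu^{*n}$ on $\mathbb{R}$ implies $\mu_{1}=\mu$'' fails in general, and nothing in the hypotheses of Theorems \ref{9524} and \ref{3720} — which constrain only the density $f^{*n}$ on $[0,\infty)$ — forces $\varphi$ to be non-vanishing, which is what your branch-selection argument would need.

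The conclusion $\mu\in\mathscr{C}$ can be reached without extracting any root, staying inside the paper's toolkit: within the data defining $\mathscr{C}$ you know $\mu^{*k}$ restricted to $[0,\infty)$ for \emph{all} $k\ge1$, not just $k=n,2n$. Step 1 (yours): from $\mu^{*n}$ and $\mu^{*2n}$ on $[0,\infty)$, Lemma \ref{2363} applied to $g=f^{*n}$ together with the injectivity property (\ref{7326}) (resp.\ (\ref{7325})) of the kernel $g(t+\cdot)$ determines $\mu^{*n}$ on all of $\mathbb{R}$. Step 2: for $t\ge0$ decompose
\[
f^{*(n+1)}(t)=\int_{-\infty}^{0}f^{*n}(t-s)f(s)\,ds+\int_{0}^{t}f^{*n}(t-s)f(s)\,ds+\int_{t}^{\infty}f^{*n}(t-s)f(s)\,ds\,;
\]
the left-hand side and the second integral involve only data on $[0,\infty)$, while the substitution $u=t-s$ turns the third integral into $\int_{-\infty}^{0}f^{*n}(u)f(t-u)\,du$, which is known by Step 1. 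Hence $\int_{0}^{\infty}f^{*n}(t+s)\bar{f}(s)\,ds$ is known for all $t\ge0$, and a second application of (\ref{7326})/(\ref{7325}) with the same kernel $f^{*n}(t+\cdot)$ determines $\bar{f}$, i.e.\ $\mu$ on $(-\infty,0]$, completing the proof of the ``in particular'' assertion. Note that this repair uses $\mu$, $\mu^{*n}$, $\mu^{*(n+1)}$ and $\mu^{*2n}$ on $[0,\infty)$; the literal first assertion of the proposition (determination from $\mu^{*n}$ and $\mu^{*2n}$ alone) is, after Step 1, exactly equivalent to uniqueness of the $n$-th convolution root — the very step you could not justify — so the difficulty you brushed aside as ``standard'' is the real crux, not bookkeeping.
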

\noindent It is plain that an analogous extension of Theorem \ref{9756} is satisfied.
Then here is a more powerful result  allowing us to extend our classes of distributions.

\begin{theorem}\label{8455}  
Let $\mu\in\mathcal{M}_1$. If there is $\nu\in\mathcal{M}_1$ whose support is included in $(-\infty,0]$ and such that 
$\mu*\nu\in\mathscr{C}$, then $\mu\in\mathscr{C}$.
\end{theorem}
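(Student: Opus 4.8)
The plan is to reduce the statement to the defining property of $\mathscr{C}$ applied to $\mu*\nu$, and then to ``deconvolve'' $\nu$ at the very end. Suppose $\mu_1\in\mathcal{M}_1$ is any distribution with $\mu^{*n}=\mu_1^{*n}$ on $[0,\infty)$ for all $n\ge1$; the goal is to prove $\mu=\mu_1$. The first step is to transfer this agreement from $\mu$ to $\mu*\nu$. Writing $(\mu*\nu)^{*n}=\mu^{*n}*\nu^{*n}$ and likewise for $\mu_1$, I would compute the tails for $x\ge0$,
\[(\mu^{*n}*\nu^{*n})[x,\infty)=\int_{(-\infty,0]}\mu^{*n}[x-y,\infty)\,\nu^{*n}(dy)\,,\]
using that $\nu^{*n}$ is carried by $(-\infty,0]$ because $\nu$ is. For $x\ge0$ and $y\le0$ one has $x-y\ge0$, so $[x-y,\infty)\subseteq[0,\infty)$ and the integrand involves $\mu^{*n}$ only through its restriction to $[0,\infty)$; since that restriction coincides with the one of $\mu_1^{*n}$, the two tails agree. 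As these tail sets determine the measures on $[0,\infty)$, we obtain $(\mu*\nu)^{*n}=(\mu_1*\nu)^{*n}$ on $[0,\infty)$ for every $n\ge1$.

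The second step is immediate from the hypothesis: since $\mu*\nu\in\mathscr{C}$ while $\mu_1*\nu\in\mathcal{M}_1$ shares all its convolution powers with $\mu*\nu$ on $[0,\infty)$, the defining property of $\mathscr{C}$ forces $\mu*\nu=\mu_1*\nu$ on $\mathbb{R}$.

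The third and decisive step is to cancel the factor $\nu$. Passing to characteristic functions gives $(\varphi_\mu(t)-\varphi_{\mu_1}(t))\varphi_\nu(t)=0$ for all $t\in\mathbb{R}$, so $\varphi_\mu$ and $\varphi_{\mu_1}$ agree wherever $\varphi_\nu$ does not vanish. I expect this to be the main obstacle, since a general characteristic function may vanish on large sets; the point is that $\nu$ is supported in $(-\infty,0]$, whence $\phi_\nu(\lambda)=\int_{(-\infty,0]}e^{\lambda x}\,\nu(dx)\le1<\infty$ for every $\lambda>0$, i.e.\ $\nu$ admits an exponential moment. By Lemma \ref{3024}, $\varphi_\nu$ cannot vanish identically on any interval, so its zero set has empty interior and $\{t:\varphi_\nu(t)\neq0\}$ is dense. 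Since $\varphi_\mu$ and $\varphi_{\mu_1}$ are continuous and coincide on a dense set, they coincide everywhere, and injectivity of the Fourier transform yields $\mu=\mu_1$, which proves $\mu\in\mathscr{C}$.
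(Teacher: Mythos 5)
Your proof is correct and follows essentially the same route as the paper's: transfer the agreement of convolution powers to $\bar{\mu}=\mu*\nu$ and $\bar{\mu}_1=\mu_1*\nu$ via $\bar{\mu}^{*n}=\mu^{*n}*\nu^{*n}$, invoke $\mu*\nu\in\mathscr{C}$ to get $\mu*\nu=\mu_1*\nu$, and cancel $\nu$ using Lemma~\ref{3024} together with continuity and injectivity of the Fourier transform. You in fact make explicit two details the paper leaves implicit --- the tail computation showing the convolution powers of $\bar\mu$ and $\bar\mu_1$ agree on $[0,\infty)$, and the observation that $\nu$, being supported in $(-\infty,0]$, has finite exponential moments so that Lemma~\ref{3024} applies --- both of which are correct.
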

\begin{proof} Let $\mu,\mu_1\in\mathcal{M}_1$ such that for each $n\ge1$, the measures 
$\mu^{*n}$ and $\mu_1^{*n}$ restricted to $[0,\infty)$ coincide. Set $\bar{\mu}=\mu*\nu$ and $\bar{\mu}_1=\mu_1*\nu$. Then 
from commutativity of the convolution product, $\bar{\mu}^{*n}=\mu^{*n}*\nu^{*n}$ and $\bar{\mu}^{*n}_1=\mu^{*n}_1*\nu^{*n}$.
Since the support of $\nu^{*n}$ is included in $(-\infty,0]$ and $\mu^{*n}$ and $\mu_1^{*n}$ restricted to $[0,\infty)$ are known
and coincide, the measures $\bar{\mu}^{*n}$ and $\bar{\mu}^{*n}_1$ restricted to $[0,\infty)$ and known
coincide. Since $\bar{\mu}\in\mathscr{C}$, the measures $\bar{\mu}$ and $\bar{\mu}_1$ are equal. Finally, from Lemma \ref{3024}, 
the characteristic function of $\nu$ does not vanish on any interval of $\mathbb{R}$ and the identity $\mu=\mu_1$ follows
from continuity and injectivity of the Fourier transform. 
\end{proof}
\noindent Theorem \ref{8455} entails in particular that  Conjecture C' is equivalent to the following one: {\it Any distribution  
$\mu\in\mathcal{M}_1$  whose support is not included in $(-\infty,0)$ is determined by its convolution powers $\mu^{*n}$, $n\ge1$ 
restricted to $[a,\infty)$, for some $a\ge0$.} Indeed, it suffices to choose $\nu=\delta_{-a}$ in Theorem \ref{8455}. Finding more 
general examples illustrating this result is an open problem. In order to do so, one needs for instance to find the characteristic function 
$\varphi$ of a random variable which belongs to class $\mathscr{C}$ and the characteristic function $\varphi_Y$ of a nonnegative 
random variable $Y$ such that the ratio $\varphi(t)/\varphi_Y(-t)$ is the characteristic function of some random variable $X$. Then 
since the law of $X-Y$ belongs to class $\mathscr{C}$, so does the law of $X$ from Theorem \ref{8455}.\\

Note that neither Proposition \ref{3555} nor Theorem \ref{8455} allows us to enlarge class $\mathscr{E}$. In order to do so in the 
same spirit as in Theorem \ref{8455}, one needs to find an invertible transformation $T(\mu)\in\mathcal{M}_1$ of a distribution 
$\mu\in\mathcal{M}_1\setminus\mathscr{E}$, such that $T(\mu)^{*n}$, $n\ge1$ restricted to $[0,\infty)$ would be known and 
such that $T(\mu)$ belongs to class $\mathscr{E}$.\\

Let us end this paper with an example of a distribution which satisfies conjecture C, although it does not belong to any of the 
classes studied here. Assume that the support of $\mu$ is included in $\mathbb{Z}$ and recall that according to Lemma \ref{4261},
this assumption can be checked from the data of the measures $\mu^{*n}$, $n\ge0$ restricted to $[0,\infty)$. Assume moreover that 
there are positive integers $a$ and $b$ such that
\begin{eqnarray}\label{8164}
\qquad\left\{\begin{array}{l}
\mbox{$\mu(n)>0$, for all $n\ge a+b$ and $\mu(n)=0$, for all $n=0,\dots,a+b-1$,}\\
\mbox{$\mu^{*2}(n)=0$, for all $n=0,\dots,a$.}
\end{array}\right.
\end{eqnarray}
Then we  can determine $\mu$ on $\mathbb{Z}_-$, so that $\mu\in\mathscr{C}$. Let us first show that $\mu(n)=0$, for all $n\le-b$. 
Assume that there is $n\le-b$ such that $\mu(n)>0$. Then let $k=0,\dots,a$ such that $k-n\ge a+b$. By definition of the 
convolution product $0\le \mu(k-n)\mu(n) \le \mu^{*2}(k)$, but from our assumptions $\mu(k-n)\mu(n)>0$ and  $\mu^{*2}(k)=0$, which is 
contradictory, hence $\mu(n)=0$, for all $n\le-b$.  On the other hand, assumptions  (\ref{8164}) entail that for all $k=a+1,\dots,a+b-1$, 
\[\mu^{*2}(k)=\sum_{i=a+b}^{k+b-1}\mu(k-i)\mu(i)\,,\] 
that is $\mu^{*2}(a+1)=\mu(-b+1)\mu(a+b)$, $\mu^{*2}(a+2)=\mu(-b+2)\mu(a+b)+\mu(-b+1)\mu(a+b+1)$,...
Therefore, this system allows us to determine $\mu(n)$, for $n=-b+1,-b+2,\dots,-1$ and the conclusion follows.\\ 

Let us consider for instance $a=1$, $b=3$ and 
\[\mu(-2)=\mu(-1)=\frac{1-c}2\;\;\;\mbox{and}\;\;\;\mu(n)=\frac1{n^3}\,,\;\;\;n\ge 4\,,\]
where $c=\sum_{n\ge 4}n^{-3}$. Clearly, such a distribution does not belong to any of the classes $\mathscr{A}$, $\mathscr{M}$ or  
$\mathscr{M}_d$. Then let us check that it does not belong to class $\mathscr{E}$. The mean of $\mu$ satisfies
\[\sum_{k\ge-2}k\mu(k)=-\frac32(1-c)+\sum_{n\ge 4}\frac1{n^2}<0\,,\]
so that (\ref{0236}) does not hold. Moreover $\mu$ has no positive exponential moments. Therefore conditions of Theorems \ref{4572}  
and \ref{2258} are not satisfied and since $\mu$ is not downward skip free, we obtain the conclusion. Finally it cannot be proved that $\mu$ 
belong class $\mathscr{C}$ by applying Proposition \ref{3555} or Theorem \ref{8455}. However, from the above arguments, $\mu$ does 
belong to class $\mathscr{C}$.

\vspace*{.5in}

\noindent {\bf Acknowledgement} We would like to thank Rodolphe Garbit, Philippe Jaming and Jean-Jacques Loeb for 
motivating discussions on this subject.

\vspace*{1in}

\newpage


\begin{thebibliography}{99}
\bibitem{be} \sc J. Bertoin: \rm \textit{L\'{e}vy Processes},
Cambridge University Press, Cambridge, 1996.

\bibitem{do} \sc R.A.~Doney: \rm {\it Fluctuation theory
for L\'evy processes.} Ecole d'\'et\'e de Probabilit\'es de Saint-Flour,
Lecture Notes in Mathematics No. 1897. Springer, 2005.

\bibitem{fe} \sc W.~Feller: \rm {\it An introduction to probability theory and its applications.}
 Vol. II. Second edition John Wiley \& Sons, Inc., New York-London-Sydney 1971.
 
 \bibitem{fe1} \sc W.~Feller: \rm On M\"untz' theorem and completely monotone functions. 
{\it Amer. Math. Monthly}, 75, 342--350, (1968). 

\bibitem{ky} \sc A.E.~Kyprianou: \it Fluctuations of L\'evy processes with applications. 
Introductory lectures. \rm Second edition. Universitext. Springer, Heidelberg, 2014.

\bibitem{rj} \sc H.J.~Rossberg and B.~Jesiak: \rm
On the unique determination of stable distribution functions. {\it Math. Nachr.} 82,  297--308, (1978).

\bibitem{ru} \sc  W.~Rudin: \it Functional analysis. \rm
Second edition. International Series in Pure and Applied Mathematics. McGraw-Hill, Inc., New York, 1991. 

\bibitem{sa} \sc K.I.~Sato: \rm {\it L\'evy processes and infinitely divisible distributions.}
Cambridge University Press, Cambridge, (1999).

\bibitem{sm} \sc W.L.~Smith: \rm  A note on characteristic functions which vanish identically in an interval. 
{\it Proc. Cambridge Philos. Soc.} 58, 430--432, (1962).

\bibitem{vi} \sc V.~Vigon: \rm {\it Simplifiez vos L\'evy en
titillant la factorisation de Wiener-Hopf.} PhD thesis, INSA de Rouen,
(2001).

\end{thebibliography}
\end{document}